\documentclass[12pt,a4paper]{amsart}
\usepackage{graphicx}
\usepackage{color}
\usepackage{enumerate}
\usepackage{amssymb}
\hoffset=-2cm\leftmargin-3cm
\voffset=-2cm\topmargin2cm 
\textwidth16.6cm\textheight21cm
\linespread{1.1}
\usepackage[colorlinks=true]{hyperref}
\hypersetup{urlcolor=blue, citecolor=blue, draft=false}
\newtheorem{definition}{Definition}[section]
\newtheorem{theorem}{Theorem}[section]
\newtheorem{lemma}{Lemma}[section]
\newtheorem{corollary}{Corollary}[section]

\theoremstyle{remark}

\theoremstyle{remark}\newtheorem{remark}{Remark}[section]

\usepackage{subfigure}
\theoremstyle{definition}

\begin{document}
\title[Certain subclass of harmonic functions associated with univalent functions]{Certain subclass of harmonic functions associated with univalent functions}

\author[Prachi  Prajna Dash, Jugal  Kishore  Prajapat]{PRACHI PRAJNA DASH$^\ddagger$, Jugal Kishore Prajapat$^\dagger$}

\address{$^\ddagger$Department of Mathematics, Central University of Rajasthan, Bandarsindri, Kishangarh-305817, Dist.- Ajmer, Rajasthan, India}
\email{prachiprajnadash@gmail.com, jkprajapat@gmail.com}

\date{}

\begin{abstract}
In this paper, we define a subclass of sense-preserving harmonic functions associated with a class of analytic functions satisfying a differential inequality. We then establish a close relation between both subclasses. Further, we obtain some characteristic properties including radius properties, convolution, coefficient estimates and their properties, growth estimates, and convex combination for the functions in the defined subclass. At last, we produce conditions for some special functions as well as harmonic univalent polynomials to belong to the defined subclass of harmonic functions.
\end{abstract}

\subjclass[2010]{30C45, 30C50}    
\keywords{Harmonic functions, univalent functions, coefficient estimates, convolution, convex combinations.}        

\maketitle
\section{Introduction}
\setcounter{equation}{0}
A complex-valued harmonic function in a domain $D\subset\mathbb{C}$ is defined as a function $f(z)=u(z)+iv(z)$, which is twice continuously differentiable and $\Delta f(z)=4f_{z\overline{z}}(z)=0$ in $D$. The function $f(z)$ has a canonical representation $f(z)=h(z)+\overline{g(z)}$ when $D$ is simply connected, where $h(z)$ and $g(z)$ are analytic functions in $D$ and are termed as analytic and co-analytic parts of $f(z)$, respectively. It is well known that $f(z)$ is locally univalent in $D$ if, and only if, the Jacobian $J_f(z)\neq0$ and $f(z)$ is sense-preserving if $J_f(z)>0$  in $D$, where $J_f(z)=|h'(z)|^2-|g'(z)|^2$. Let $\mathcal{H}$ be the class of complex-valued harmonic functions $f(z)$ defined in the open unit disk $\mathbb{D}=\{z\in \mathbb{C}: |z|<1\}$, normalized by the condition $f(0)=0=f_z(0)-1$. As a result, $h(z)$ and $g(z)$ of $f(z)$ have power series representations as
\begin{equation}\label{1.1}
h(z)=z+\sum_{n=2}^{\infty} a_n z^n \quad \text{and} \quad g(z)=\sum_{n=1}^{\infty} b_n z^n.
\end{equation}
Let $\mathcal{H}^0$ be a subclass of $\mathcal{H}$, defined as $\mathcal{H}^0=\{f(z)\in \mathcal{H} : f_{\overline{z}}(0)=0\}$ in $\mathbb{D}$. Thus, for $f(z)\in \mathcal{H}^0$, \eqref{1.1} will become
\begin{equation}\label{1.2}
h(z)=z+\sum_{n=2}^{\infty} a_n z^n \quad \text{and} \quad g(z)=\sum_{n=2}^{\infty} b_n z^n. 
\end{equation}
Let $\mathcal{S}_{\mathcal{H}}$ be another subclass of $\mathcal{H}$ containing all univalent functions of $\mathcal{H}$ and let $\mathcal{S}_{\mathcal{H}}^0$ be a subclass of $\mathcal{H}^0$ containing all univalent functions of $\mathcal{H}^0$ in $\mathbb{D}$. Clearly, $\mathcal{S}_{\mathcal{H}}^0\subset \mathcal{S}_{\mathcal{H}}$. Clunie and Sheil-Small \cite{css} started study on the class of harmonic univalent functions. These articles \cite{durh,pr,ppk,jm,jms} provide basic information about harmonic functions as well as some more recent studies. 

A starlike domain with respect to a point in it is defined to be that which contains all line segments connecting any point in it to the respected point in it. It is simply called a starlike domain if the respected point is the origin. Further, a convex domain is defined to be that which is starlike with respect to every point in it. A harmonic starlike function $f(z)\in \mathcal{H}$ is defined to be that which maps $\mathbb{D}$ to a starlike domain. The class of starlike function in $\mathcal{H}$ is denoted by $\mathcal{S}^*_{\mathcal{H}}$. In similar, a harmonic convex function $f(z)\in \mathcal{H}$ is defined to be that which maps $\mathbb{D}$ to a convex domain, and the collection of such functions is denoted by $\mathcal{K}_{\mathcal{H}}$. A close-to-convex domain is defined to be that which complement can be written as disjoint union of non-crossing half lines. A harmonic close-to-convex function $f(z)\in \mathcal{H}$ is defined to be that which maps $\mathbb{D}$ to a close-to-convex domain, and the collection of such functions is denoted by $\mathcal{C}_{\mathcal{H}}$. Let $\mathcal{K}^0_{\mathcal{H}}$,  $\mathcal{S}^{*0}_{\mathcal{H}}$ and $\mathcal{C}_{\mathcal{H}}^0$ be the subclasses of $\mathcal{K}_{\mathcal{H}}$,  $\mathcal{S}^*_{\mathcal{H}}$ and $\mathcal{C}_{\mathcal{H}}$ respectively, where $f_{\overline{z}}(0)=0$.

\subsection{Stable analytic and stable harmonic functions}
For all $|\zeta|=1$, if the analytic functions $F_{\zeta}(z)=h(z)+\zeta g(z)$ are convex, starlike, close-to-convex and univalent on $\mathbb{D}$ then the function $F(z)=h(z)+g(z)$ is said to be stable analytic convex, stable analytic starlike, stable analytic close-to-convex and stable analytic univalent on $\mathbb{D}$, respectively. In similar, if the harmonic functions $f_{\zeta}(z)=h(z)+\zeta \overline{g(z)}$ with $|\zeta|=1$ are convex, starlike, close-to-convex and univalent on $\mathbb{D}$ then the function $f(z)=h(z)+\overline{g(z)}$ is said to be stable harmonic convex, stable harmonic starlike, stable harmonic close-to-convex and stable harmonic univalent on $\mathbb{D}$, respectively. More information regarding stable harmonic functions is available in \cite{hm}. The following is one useful result of stable analytic and stable harmonic functions.

\begin{lemma}\cite{hm}\label{l2}
A sense-preserving harmonic function $f(z)=h(z)+\overline{g(z)}$ on $\mathbb{D}$ is stable harmonic univalent (respectively, convex), if and only if the analytic function $F(z)=h(z)+g(z)$ is stable analytic univalent (respectively, convex)  on $\mathbb{D}$. Same result holds for $f(z)$ to be stable harmonic starlike on $\mathbb{D}$ when the functions $f_{\zeta}(z)=h(z)+\zeta\overline{g(z)}$ fix the origin on $\mathbb{D}$ for all $|\zeta|=1$.
\end{lemma}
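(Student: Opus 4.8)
\textit{Plan.} All three equivalences will be obtained by comparing the one-parameter families $F_{\lambda}=h+\lambda g$ and $f_{\zeta}=h+\zeta\overline{g}$, $|\lambda|=|\zeta|=1$, through the identity $\zeta\overline{g(z)}=\overline{\overline{\zeta}\,g(z)}$, which displays $f_{\zeta}$ as the harmonic mapping with analytic part $h$ and co-analytic part $\overline{\zeta}g$. Since $f$ is sense-preserving, $|g'|<|h'|$ on $\mathbb{D}$, hence $F_{\lambda}'=h'(1+\lambda g'/h')\neq 0$ and $J_{f_{\zeta}}=|h'|^{2}-|g'|^{2}>0$; so every $F_{\lambda}$ is locally univalent and every $f_{\zeta}$ is locally univalent and sense-preserving, and this survives the rotations $e^{-i\alpha}(\cdot)$ used below because the Jacobian is unchanged. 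That disposes of all orientation and local-univalence side conditions on both sides. For \emph{univalence}, take $z_{1}\neq z_{2}$ and put $A=h(z_{1})-h(z_{2})$, $B=g(z_{1})-g(z_{2})$; then $F_{\lambda}(z_{1})=F_{\lambda}(z_{2})\iff A=-\lambda B$ and $f_{\zeta}(z_{1})=f_{\zeta}(z_{2})\iff A=-\zeta\overline{B}$, while as $\lambda$ and $\zeta$ traverse the unit circle the sets $\{-\lambda B\}$ and $\{-\zeta\overline{B}\}$ coincide, both being the circle $\{w:|w|=|B|\}$ (a single point if $B=0$). Thus, for each such pair, some $F_{\lambda}$ identifies $z_{1},z_{2}$ iff some $f_{\zeta}$ does; negating and ranging over all pairs gives that every $F_{\lambda}$ is injective iff every $f_{\zeta}$ is, i.e.\ $F$ is stable analytic univalent iff $f$ is stable harmonic univalent.

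\textit{Convexity.} Here I would invoke the Clunie--Sheil-Small shear theorem \cite{css}: a sense-preserving locally univalent harmonic mapping $H+\overline{G}$ is univalent with image convex in the direction of the real axis if and only if $H-G$ is univalent with image convex in that direction. A domain is convex exactly when it is convex in every direction, so $f_{\zeta}$ has convex image iff $e^{-i\alpha}f_{\zeta}$ has image convex in the real direction for every $\alpha\in\mathbb{R}$; the analytic and co-analytic parts of $e^{-i\alpha}f_{\zeta}$ being $e^{-i\alpha}h$ and $e^{-i\alpha}\zeta\overline{g}=\overline{e^{i\alpha}\overline{\zeta}g}$, the shear theorem rephrases this as: $h-e^{2i\alpha}\overline{\zeta}g$ is univalent and convex in the direction $e^{i\alpha}$, for every $\alpha$. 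Quantifying over $|\zeta|=1$ as well, and noting that for fixed $\alpha$ the factor $e^{2i\alpha}\overline{\zeta}$ sweeps the whole unit circle, ``$f_{\zeta}$ convex for all $|\zeta|=1$'' becomes ``$h-\nu g$ univalent and convex in every direction for all $|\nu|=1$'', i.e.\ ``$h+\lambda g$ convex for all $|\lambda|=1$'' --- which is precisely ``$F=h+g$ is stable analytic convex''. Running the same chain backwards (using that a fully convex $h+\lambda g$ is convex in any prescribed single direction) yields the converse.

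\textit{Starlikeness.} Assume $f_{\zeta}(0)=0$ for all $|\zeta|=1$, equivalently $g(0)=0$, so that starlikeness about the origin is the right notion for each $f_{\zeta}$; then $F_{\lambda}(0)=0$ too. I would use the criteria $\operatorname{Re}\!\big(zF_{\lambda}'(z)/F_{\lambda}(z)\big)\ge0$ on $\mathbb{D}$ and $\operatorname{Re}\!\big(Df_{\zeta}(z)/f_{\zeta}(z)\big)\ge0$ on $\mathbb{D}$, where $Df_{\zeta}=zh'-\zeta\overline{zg'}$. For fixed $z$, both $\lambda\mapsto zF_{\lambda}'(z)/F_{\lambda}(z)$ and $\zeta\mapsto Df_{\zeta}(z)/f_{\zeta}(z)$ are M\"obius transformations, hence send the unit circle to circles (or lines); computing centre and radius and imposing that this circle lie in the closed right half-plane --- most transparent after the rotation that makes $h(z)$ and $g(z)$ real --- one finds that in both cases the requirement reduces to one and the same inequality in $z$, plus a positivity side-condition that holds on one side exactly when it holds on the other. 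Ranging over $z\in\mathbb{D}$ then gives that $f_{\zeta}$ is starlike for all $|\zeta|=1$ iff $F_{\lambda}$ is starlike for all $|\lambda|=1$.

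\textit{Main obstacle.} The univalence and starlikeness parts are essentially bookkeeping about M\"obius images of the unit circle. The step I expect to be the real obstacle is convexity: as a global, second-order property it cannot be read off a single circle, so the argument truly depends on the shear theorem together with the interchange ``fully convex $\Leftrightarrow$ convex in every direction'', and one must check that the sense-preserving and locally-univalent hypotheses demanded by the shear theorem persist under the rotations $e^{-i\alpha}(\cdot)$ and the substitution $\nu=e^{2i\alpha}\overline{\zeta}$. A secondary point worth spelling out is why the origin-fixing hypothesis is indispensable in the starlike case: without $g(0)=0$ the $f_{\zeta}$ need not fix $0$, so ``starlike about the origin'' is not the notion being compared.
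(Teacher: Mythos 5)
This lemma is quoted in the paper from Hern\'andez and Mart\'in \cite{hm} without proof, so there is no in-paper argument to compare against; I can only judge your outline on its own merits (and against the known source). Your univalence step is complete and is the standard argument: for $z_1\neq z_2$ the sets $\{-\lambda B:|\lambda|=1\}$ and $\{-\zeta\overline{B}:|\zeta|=1\}$ coincide, so some $F_\lambda$ identifies the pair iff some $f_\zeta$ does. Your convexity step --- rotations, the Clunie--Sheil-Small shear theorem \cite{css}, the identity ``convex $=$ convex in every direction'', and the observation that the Jacobian is unchanged under the rotations so the sense-preserving hypothesis of the shear theorem persists --- is also sound, and is essentially the route of the original proof.

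The starlike part, however, has a genuine gap. Your plan rests on the equivalence ``$f_\zeta$ univalent and starlike (with $f_\zeta(0)=0$) iff $\Re\bigl((zh'-\zeta\overline{zg'})/f_\zeta\bigr)\ge 0$ at every $z\in\mathbb D$''. The ``if'' direction can be made to work (degree/argument-principle argument, given non-vanishing off the origin), but the ``only if'' direction is exactly the hereditary (\emph{fully starlike}) property, and hereditary behaviour fails for harmonic mappings: already convexity is not hereditary (the harmonic half-plane map $\Re\frac{z}{1-z}+i\,\Im\frac{z}{(1-z)^2}$ is convex but maps some subdisks onto non-convex domains). So ``each $f_\zeta$ starlike'' does not hand you the pointwise inequality you need in order to transfer to the analytic family; you would have to prove that stable harmonic starlikeness forces full starlikeness (which is normally obtained as a \emph{consequence} of this lemma, so care is needed to avoid circularity) or argue differently. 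A second, lesser gap: the claim that the two circle conditions ``reduce to one and the same inequality'' is asserted, not computed, and the two image circles are in general different. If you do the computation, the squared inequalities do coincide --- both reduce to $(\Re[zh'\overline h])^2+(\Re[zg'\overline g])^2+2\,\Im[zh'\overline h]\,\Im[zg'\overline g]\ \ge\ |zh'|^2|g|^2+|zg'|^2|h|^2$ --- but they carry different positivity side conditions, $\Re[zh'\overline h+zg'\overline g]\ge 0$ versus $\Re[zh'\overline h-zg'\overline g]\ge 0$, and your assertion that one holds exactly when the other does is unproved. As it stands, the starlike third of the lemma is not established by your outline.
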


The convolution (or Hadamard product) of two analytic functions $F_1(z)=\sum_{n=0}^{\infty} a_n z^n $ and $F_2(z)=\sum_{n=0}^{\infty} b_n z^n $ is defined by
    \[(F_1\ast F_2)(z)=\sum_{n=0}^{\infty} a_n \,b_n \,z^n.\]
    Following the above, the convolution for two harmonic functions $f_1(z)=h_1(z)+\overline{g_1(z)}$ and $f_2(z)=h_2(z)+\overline{g_2(z)}$ in $\mathcal{H}$ is defined as
    \[(f_1\ast f_2)(z)=(h_1*h_2)(z)+\overline{(g_1\ast g_2)(z)}.\]
Clunie and Sheil-Small \cite{css} also proved a result for the convolution between a harmonic and an analytic function which states, if $f(z)$ is harmonic convex and $F(z)$ is analytic convex, then $f(z)\ast (F(z)+\gamma\overline{F(z)})$ is a harmonic close-to-convex function where $|\gamma|<1$.

Recently, Ponnusamy et al. \cite{pyy} (see also \cite{kpv}) have studied the following subclass of $\mathcal{H}$ :
\begin{align*}
    C^2_{\mathcal{H}}&=\left\{f(z)=h(z)+\overline{g(z)}\in \mathcal{H}:|h'(z)-1|<1-|g'(z)|\right\}\\
    &\subseteq C^1_{\mathcal{H}}=\left\{f(z)=h(z)+\overline{g(z)}\in \mathcal{H}:\Re(h'(z))>\Re(g'(z))\right\},
\end{align*}
and proved that its elements are close-to-convex in $\mathbb D$. Motivated by this study, we define the following subclass of $\mathcal{H}^0$ :

\begin{definition}\label{d1}
For $\lambda >0$
\[\Omega_{\mathcal{H}}^0(\lambda):= \left \{f(z)=h(z)+\overline{g(z)}\in \mathcal{H}^0 : |h(z)-zh'(z)|<\lambda-|g(z)-zg'(z)|, \; z \in \mathbb{D} \right \}.\]
\end{definition}
We see many examples of functions in $\Omega_{\mathcal{H}}^0(\lambda)$ in the next section. Now, we consider the subclass of $\mathcal{A}$ defined by us in 2023 \cite{pj}.
\begin{definition}\label{d2}
For $\lambda >0$
\[\Omega(\lambda):= \left\{ F(z)\in \mathcal{A} : |F(z)-zF'(z)|<\lambda, \; z \in \mathbb{D} \right\}.\]
\end{definition}
The class $\Omega(\lambda)$ is non-empty as the identity function satisfies the Definition \ref{d2}. Note that  $\Omega(\lambda_1) \subseteq \Omega(\lambda_2)$ when $\lambda_1 \leq \lambda_2$, and $\Omega(1/2)=\Omega$ is the class defined by Peng and Zhong \cite{peng} that is closed under convolution. The functions in $\Omega$  are univalent and starlike in $\mathbb{D}$, and convex are in $\mathbb{D}_{1/2}$ (see \cite{peng, hesam}). Obradovi\'{c} and Peng \cite{obradovic} gave certain sufficient conditions for functions belongs to the class $\Omega$, also Wani and Swaminathan \cite{wani} obtained certain sufficient conditions and subordination properties for functions belongs to the class $\Omega$. Thus, the functions in $\Omega(\lambda)$ are univalent and starlike in $\mathbb{D}$, and are convex in $\mathbb{D}_{1/2}$ for $\lambda \in \left(0, 1/2 \right]$. We \cite{pj} observed that the function
\begin{equation} \label{1.3}
F(z)=z+\dfrac{\lambda z^2}{2}+ \dfrac{\lambda z^3}{4}, \quad \lambda > 0, \, z \in \mathbb{D},
\end{equation}
belongs to the class $\Omega(\lambda)$. The following results we obtained in \cite{pj} for the class $\Omega(\lambda)$ are being used in our main results.

\begin{lemma}\label{l3}
The radius of starlikeness of $\Omega(\lambda)$ is $1/2\lambda$. Further, if $F(z)\in \Omega(\lambda)$, then
\begin{align}
|z|-\lambda |z|^2 \leq &\; |F(z)| \leq \;|z|+\lambda |z|^2,\label{1.4} \\ 
1-2\lambda |z| \leq &\; |F'(z)| \leq \;1+2\lambda |z|.\label{1.5}
\end{align}
Equality occurs in both the inequalities, if and only if $F(z)=z+\lambda \eta z^2$ for $z\in \mathbb{D}$ and $z\neq 0$, where $|\eta|=1$.
\end{lemma}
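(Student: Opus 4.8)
The plan is to first sharpen the defining inequality and then read off every conclusion from the sharpened form. Writing $F(z)=z+\sum_{n=2}^{\infty}a_nz^n$, a direct computation gives
\[
\phi(z):=F(z)-zF'(z)=-\sum_{n=2}^{\infty}(n-1)a_nz^n ,
\]
so $\phi$ is analytic on $\mathbb{D}$, satisfies $|\phi(z)|<\lambda$ there, and vanishes at the origin together with $\phi'(0)$. Hence $\phi(z)/z^2$ extends to an analytic function on $\mathbb{D}$, and applying the maximum modulus principle on the circles $|z|=r$ and letting $r\to1^{-}$ (equivalently, applying the Schwarz lemma first to $\phi/\lambda$ and then to $\phi/(\lambda z)$) yields the key estimate
\[
|F(z)-zF'(z)|\le\lambda|z|^2,\qquad z\in\mathbb{D}.
\]
Moreover, equality at a single point $z_0\neq0$ forces $\phi(z)\equiv\lambda\eta z^2$ with $|\eta|=1$; comparing power series coefficients, this means $a_2=-\lambda\eta$ and $a_n=0$ for $n\ge3$, that is, $F(z)=z+\lambda\eta z^2$ (after renaming $\eta$).

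Next I would deduce the growth estimates. Set $P(z)=F(z)/z$, so that $P(0)=1$ and $P'(z)=\bigl(zF'(z)-F(z)\bigr)/z^2=-\phi(z)/z^2$, whence $|P'(z)|\le\lambda$ on $\mathbb{D}$ by the key estimate. Integrating $P'$ along the segment $[0,z]$ gives $|P(z)-1|\le\lambda|z|$, i.e. $|F(z)-z|\le\lambda|z|^2$, which is \eqref{1.4}. Since $F'(z)=P(z)+zP'(z)$, we get
\[
|F'(z)-1|\le|P(z)-1|+|z|\,|P'(z)|\le2\lambda|z|,
\]
which is \eqref{1.5}. For the equality assertion, equality in either bound forces $|P'|$ to attain the value $\lambda$ at an interior point, so $P'$ is a constant of modulus $\lambda$, and by the previous paragraph $F(z)=z+\lambda\eta z^2$; conversely, for this $F$ one checks $|F(z)-z|=\lambda|z|^2$ directly, and taking $z$ with $\eta z=\pm|z|$ shows the bounds in \eqref{1.4} and \eqref{1.5} are attained.

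For the radius of starlikeness, note that $|F(z)|\ge|z|-\lambda|z|^2>0$ for $0<|z|<1/\lambda$, so that
\[
\left|\frac{zF'(z)}{F(z)}-1\right|=\frac{|F(z)-zF'(z)|}{|F(z)|}\le\frac{\lambda|z|^2}{|z|-\lambda|z|^2}=\frac{\lambda|z|}{1-\lambda|z|},
\]
and the right-hand side is strictly less than $1$ precisely when $2\lambda|z|<1$. Therefore $\Re\bigl(zF'(z)/F(z)\bigr)>0$ on $\{|z|<1/(2\lambda)\}$, so $F$ is starlike there (hence on all of $\mathbb{D}$ when $\lambda\le1/2$). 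Sharpness is witnessed by $F_0(z)=z+\lambda z^2$, which belongs to $\Omega(\lambda)$ since $F_0(z)-zF_0'(z)=-\lambda z^2$; for it, $zF_0'(z)/F_0(z)=(1+2\lambda z)/(1+\lambda z)$ vanishes at $z=-1/(2\lambda)$, a point of $\mathbb{D}$ when $\lambda>1/2$, so $F_0$ is not starlike in any disk of radius larger than $1/(2\lambda)$. Hence the radius of starlikeness of $\Omega(\lambda)$ is exactly $1/(2\lambda)$.

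The one genuinely substantive step is the first: upgrading $|F(z)-zF'(z)|<\lambda$ to the $|z|^2$-weighted bound $|F(z)-zF'(z)|\le\lambda|z|^2$ and identifying its equality case. Once that is in hand, the growth estimates, the extremal function $z+\lambda\eta z^2$, and the starlikeness radius all follow by elementary manipulations. A minor point worth noting is the range $\lambda\in(0,1/2]$, where $1/(2\lambda)\ge1$ and the statement simply reaffirms that every $F\in\Omega(\lambda)$ is starlike in $\mathbb{D}$.
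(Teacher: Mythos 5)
Your proof is correct, and it is worth noting that the paper itself does not prove this lemma: it is imported verbatim from the authors' preprint \cite{pj}, so there is no in-paper argument to compare against. Your route is the natural one and every step checks out: the Schwarz-lemma upgrade of the defining inequality to $|F(z)-zF'(z)|\le\lambda|z|^2$ (with the equality case pinning down $F(z)=z+\lambda\eta z^2$), the passage to $P(z)=F(z)/z$ with $|P'|\le\lambda$ and integration along $[0,z]$ to get the growth and derivative bounds, the maximum-modulus argument identifying the extremal function in the equality cases, and finally the estimate $|zF'(z)/F(z)-1|\le\lambda|z|/(1-\lambda|z|)<1$ for $|z|<1/(2\lambda)$ together with $F_0(z)=z+\lambda z^2$ (whose derivative vanishes at $-1/(2\lambda)$, hence destroys univalence in any larger disk when $\lambda>1/2$) to settle the radius of starlikeness and its sharpness. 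Two small caveats are really imprecisions of the lemma's statement rather than gaps in your argument: the equality assertions for the lower bounds are only meaningful where $\lambda|z|\le1$, respectively $2\lambda|z|\le1$, and for $\lambda\le1/2$ the stated radius $1/(2\lambda)$ must be read as $\min\{1,1/(2\lambda)\}$ --- a point you already flag at the end.
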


\begin{lemma}\label{l4}
Let $F(z)\in \Omega(\lambda)$. Then $F(z)$ is univalent in $|z|<1/2\lambda$, and convex in $|z|<1/4\lambda$.
\end{lemma}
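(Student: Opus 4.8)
The plan is to treat the two assertions separately. The univalence statement is essentially immediate from Lemma~\ref{l3}: that lemma gives radius of starlikeness $1/(2\lambda)$, so for $F\in\Omega(\lambda)$ one has $\Re\big(zF'(z)/F(z)\big)>0$ on $|z|<1/(2\lambda)$, and a function that is starlike with respect to the origin on a disc centred at the origin is automatically univalent there; hence $F$ is univalent on $|z|<1/(2\lambda)$.

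For convexity I would first record the representation underlying Lemma~\ref{l3}. With $G(z)=F(z)/z$ one has $F(z)-zF'(z)=-z^2G'(z)$, so the defining inequality reads $|z^2G'(z)|<\lambda$ on $\mathbb D$; since this forces $|G'(z)|<\lambda/r^2$ on $|z|=r$, the maximum-modulus principle applied to $G'$ followed by $r\to1^-$ gives $|G'(z)|\le\lambda$ throughout $\mathbb D$. Thus every $F\in\Omega(\lambda)$ has the form $F(z)=z+z\int_0^z\psi(t)\,dt$ with $\psi=G'$ analytic and $\|\psi\|_\infty\le\lambda$, and conversely every such $F$ lies in $\Omega(\lambda)$. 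From $F'(z)=G(z)+z\psi(z)$ and $zF''(z)=2z\psi(z)+z^2\psi'(z)$ one obtains
\[
1+\frac{zF''(z)}{F'(z)}=\frac{G(z)+3z\psi(z)+z^2\psi'(z)}{G(z)+z\psi(z)},
\]
and the goal is to show this has positive real part for $|z|<1/(4\lambda)$.

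For $\lambda\ge1/2$ the cleanest route is to rescale into the Peng--Zhong class $\Omega=\Omega(1/2)$. Set $\Phi(z)=2\lambda F\big(z/(2\lambda)\big)$; since $\lambda\ge1/2$ this is analytic and normalised on $\mathbb D$, and with $w=z/(2\lambda)$ one computes $\Phi(z)-z\Phi'(z)=2\lambda\big(F(w)-wF'(w)\big)=-2\lambda w^2\psi(w)$, whose modulus is $<2\lambda\cdot(2\lambda)^{-2}\cdot\lambda=1/2$ because $|w|<1/(2\lambda)$ and $|\psi|\le\lambda$. Hence $\Phi\in\Omega$, so by the result of Peng and Zhong \cite{peng} quoted above $\Phi$ maps $\mathbb D_{1/2}$ onto a convex domain; the identity $F(w)=\frac{1}{2\lambda}\Phi(2\lambda w)$ then shows $F$ maps $\{|w|<1/(4\lambda)\}$ onto the convex set $\frac{1}{2\lambda}\Phi(\mathbb D_{1/2})$, which is the claim. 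When $0<\lambda<1/2$ the inclusion $\Omega(\lambda)\subseteq\Omega(1/2)$ already yields convexity in $\mathbb D_{1/2}$; to push the radius out to $1/(4\lambda)$ I would instead argue directly from the displayed formula, controlling the numerator by combining $|\psi|\le\lambda$, the Schwarz--Pick bound $|\psi'(z)|\le\lambda/(1-|z|^2)$, and the lower estimate $|F'(z)|\ge1-2\lambda|z|$ from Lemma~\ref{l3}.

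The step I expect to be the main obstacle is precisely this last estimate. A crude triangle inequality on $|2z\psi(z)+z^2\psi'(z)|$ together with the Schwarz--Pick bound for $\psi'$ deteriorates as $|z|\to1$ and does not on its own deliver the full radius $1/(4\lambda)$; one needs to exploit a genuine interaction between the sizes of $\psi$ and of $\psi'$---for example by estimating $\Re\big[(2z\psi+z^2\psi')\overline{F'}\big]$ and $|F'|^2$ jointly rather than separately, or by reducing to the worst-case choice of $\psi$---so that equality is forced only at the extremal functions $F(z)=z+\lambda\eta z^2$, $|\eta|=1$. The bulk of the work, and the place where the precise constant $1/(4\lambda)$ is pinned down, should lie there.
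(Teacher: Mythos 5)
Since the paper itself gives no proof of Lemma \ref{l4} (it is imported from the preprint \cite{pj}), your argument can only be judged on its own terms. The univalence half is fine: Lemma \ref{l3} gives $\Re\bigl(zF'(z)/F(z)\bigr)>0$ for $|z|<1/(2\lambda)$, and starlikeness on a disc centred at the origin implies univalence there. Your reduction $F(z)=z+z\int_0^z\psi(t)\,dt$ with $\|\psi\|_\infty\le\lambda$ is correctly derived, and the rescaling $\Phi(z)=2\lambda F\bigl(z/(2\lambda)\bigr)$, which lands in $\Omega=\Omega(1/2)$ and imports the Peng--Zhong convexity on $\mathbb{D}_{1/2}$, does settle the convexity radius $1/(4\lambda)$ --- but only when $\lambda\ge 1/2$, since otherwise $z/(2\lambda)$ leaves $\mathbb{D}$. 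For $\lambda<1/2$ you offer only a programme and say yourself that the decisive estimate is missing; the inclusion $\Omega(\lambda)\subseteq\Omega(1/2)$ yields convexity only in $|z|<1/2$, short of the asserted $\min\bigl(1,1/(4\lambda)\bigr)$. So, as written, the proof is incomplete: this is a genuine gap, not a presentational one.

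Moreover, the step you were unable to carry out cannot be carried out, because the stated radius is wrong for small $\lambda$. Take $F_m(z)=z-\frac{\lambda}{m-1}z^m$ with $m\ge 2$; then $|F_m(z)-zF_m'(z)|=\lambda|z|^m<\lambda$, so $F_m\in\Omega(\lambda)$, while at real $z=x\in(0,1)$
\[
1+\frac{xF_m''(x)}{F_m'(x)}=\frac{1-\frac{m^2\lambda}{m-1}\,x^{m-1}}{1-\frac{m\lambda}{m-1}\,x^{m-1}},
\]
which is negative as soon as $x^{m-1}>\frac{m-1}{m^2\lambda}$. For $\lambda=1/4$ and $m=5$ this occurs for $x>(16/25)^{1/4}\approx 0.894$, well inside the claimed disc $|z|<1/(4\lambda)=1$; and for every $\lambda<1/4$ one may choose $m$ with $(m-1)/m^2<\lambda$ to obtain a member of $\Omega(\lambda)$ that is not convex even in $\mathbb{D}$. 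Hence no sharpening of the Schwarz--Pick estimate on $\psi'$ can close your gap: what your method genuinely proves is convexity in $|z|<\min\bigl(1/2,\,1/(4\lambda)\bigr)$, which coincides with the lemma for $\lambda\ge 1/2$ but not below; for small $\lambda$ the defect lies in the statement as quoted from \cite{pj} (and it propagates to Corollary \ref{c2}), not in your technique.
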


\begin{lemma}\label{l5}
Let $F(z) \in \mathcal{A}$. If $|F''(z)|\leq 2\lambda$ then $F(z)\in \Omega(\lambda)$. The number $2\lambda$ is the best possible.
\end{lemma}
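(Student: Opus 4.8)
The plan is to turn the expression $F(z)-zF'(z)$ into an integral of $F''$ and then estimate crudely. The key observation is that for $F\in\mathcal{A}$ one has $\frac{d}{dt}\bigl(F(t)-tF'(t)\bigr)=F'(t)-F'(t)-tF''(t)=-tF''(t)$. Since $F(0)=0$, integrating along the radial segment from $0$ to $z$ gives
\[
F(z)-zF'(z)=-\int_0^z tF''(t)\,dt=-z^2\int_0^1 sF''(sz)\,ds .
\]
Now I would simply feed in the hypothesis $|F''(t)|\le 2\lambda$ for all $t\in\mathbb{D}$ (note $sz\in\mathbb{D}$ for $s\in[0,1]$, $z\in\mathbb{D}$) to obtain
\[
|F(z)-zF'(z)|\le |z|^2\int_0^1 s\,|F''(sz)|\,ds\le 2\lambda|z|^2\int_0^1 s\,ds=\lambda|z|^2<\lambda
\]
for every $z\in\mathbb{D}$, which is precisely the defining inequality of $\Omega(\lambda)$ in Definition \ref{d2}. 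Hence $F\in\Omega(\lambda)$.

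For the sharpness claim, I would first exhibit the extremal example $F(z)=z+\lambda z^2$: it satisfies $F''(z)\equiv 2\lambda$, so the hypothesis holds with equality, while $F(z)-zF'(z)=-\lambda z^2$ shows $F\in\Omega(\lambda)$. To see that $2\lambda$ cannot be replaced by any larger constant, for a fixed $\mu>2\lambda$ consider $F_\mu(z)=z+\tfrac{\mu}{2}z^2$; then $|F_\mu''(z)|\equiv\mu$, but $|F_\mu(z)-zF_\mu'(z)|=\tfrac{\mu}{2}|z|^2\to\tfrac{\mu}{2}>\lambda$ as $|z|\to1^-$, so $F_\mu\notin\Omega(\lambda)$. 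This shows the bound $2\lambda$ in the hypothesis is best possible.

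I do not anticipate a genuine obstacle here: the whole argument rests on spotting the identity $\frac{d}{dt}\bigl(F(t)-tF'(t)\bigr)=-tF''(t)$, after which the estimation of the integral is routine. The only point that needs a little care is the strictness of the inequality in Definition \ref{d2}: the estimate above yields $|F(z)-zF'(z)|\le\lambda|z|^2$, and this is \emph{strictly} less than $\lambda$ for all $z\in\mathbb{D}$ because $|z|<1$ there, so membership in $\Omega(\lambda)$ holds with room to spare. As a cross‑check one may also note the coefficient identity $F(z)-zF'(z)=-\sum_{n=2}^{\infty}(n-1)a_nz^n$, which makes the appearance of $F''(z)=\sum_{n=2}^{\infty}n(n-1)a_nz^{n-2}$ in the hypothesis natural.
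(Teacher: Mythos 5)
Your proof is correct: the identity $\frac{d}{dt}\bigl(F(t)-tF'(t)\bigr)=-tF''(t)$ with $F(0)-0\cdot F'(0)=0$ gives $F(z)-zF'(z)=-z^{2}\int_{0}^{1}sF''(sz)\,ds$, the estimate $|F(z)-zF'(z)|\le\lambda|z|^{2}<\lambda$ follows, and the family $F_\mu(z)=z+\tfrac{\mu}{2}z^{2}$ with $\mu>2\lambda$ does show the constant cannot be increased, since $\tfrac{\mu}{2}|z|^{2}>\lambda$ for $|z|$ near $1$. The present paper states this lemma without proof, citing the preprint \cite{pj}; your integral-representation argument is the standard route for such results (it is the same device used by Peng and Zhong for the case $\Omega=\Omega(1/2)$, where $|F''(z)|\le 1$ suffices), so there is nothing to fault and no substantive divergence from the intended proof.
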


\begin{lemma}\label{l6}
Let $F(z)\in \mathcal{A}$. If 
\begin{equation*}
|z^2F''(z)+zF'(z)-F(z)|\leq 3\lambda,
\end{equation*}
then $F(z)\in \Omega(\lambda)$. The number $3\lambda$ is best possible.
\end{lemma}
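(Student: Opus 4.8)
The plan is to read the left‑hand side of the hypothesis as essentially the derivative of $z\bigl(F(z)-zF'(z)\bigr)$, recover $F(z)-zF'(z)$ by a single integration, and then control its size by a Schwarz‑lemma growth estimate. This parallels the argument behind Lemma \ref{l5}. Write $u(z):=F(z)-zF'(z)$ and $G(z):=z^{2}F''(z)+zF'(z)-F(z)$, so that the hypothesis is $|G(z)|\le 3\lambda$ on $\mathbb D$ and the goal is $|u(z)|<\lambda$ on $\mathbb D$. A direct differentiation gives $u'(z)=-zF''(z)$, hence
\[
\bigl(zu(z)\bigr)'=u(z)+zu'(z)=F(z)-zF'(z)-z^{2}F''(z)=-G(z).
\]
Since $F\in\mathcal A$ we have $u(0)=0$, so $zu(z)$ vanishes at the origin; integrating and then substituting $\zeta=tz$ yields the representation
\[
F(z)-zF'(z)=u(z)=-\int_{0}^{z}G(\zeta)\,d\zeta \Big/ z=-\int_{0}^{1}G(tz)\,dt,\qquad z\in\mathbb D .
\]

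Next I would exploit that $G$ vanishes to order at least two at $0$: from the Taylor expansion $F(z)=z+\sum_{n\ge 2}a_{n}z^{n}$ one computes $G(z)=\sum_{n\ge 2}(n^{2}-1)a_{n}z^{n}$, so $G(z)/z^{2}$ is analytic in $\mathbb D$. Applying the maximum modulus principle to $G(z)/z^{2}$ together with $|G(z)|\le 3\lambda$ gives $|G(z)|\le 3\lambda|z|^{2}$ for all $z\in\mathbb D$. Substituting this bound into the integral representation,
\[
|F(z)-zF'(z)|\le\int_{0}^{1}|G(tz)|\,dt\le\int_{0}^{1}3\lambda\,t^{2}|z|^{2}\,dt=\lambda|z|^{2}<\lambda ,
\]
so $F\in\Omega(\lambda)$ by Definition \ref{d2}.

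For the sharpness assertion I would use the extremal function $F(z)=z+\lambda z^{2}$: a direct computation gives $z^{2}F''(z)+zF'(z)-F(z)=3\lambda z^{2}$, so the hypothesis holds with $\sup_{\mathbb D}|z^{2}F''+zF'-F|=3\lambda$, while $F(z)-zF'(z)=-\lambda z^{2}$ shows $\sup_{\mathbb D}|F(z)-zF'(z)|=\lambda$. Testing $F(z)=z+\lambda'z^{2}$ with $\lambda'>\lambda$ against a constant $3\lambda'>3\lambda$ then produces a function satisfying the weakened inequality but with $\sup_{\mathbb D}|F(z)-zF'(z)|=\lambda'>\lambda$, hence not in $\Omega(\lambda)$; thus $3\lambda$ cannot be replaced by anything larger. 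The only point requiring a word of care is the passage from $(zu(z))'=-G(z)$ to the integral formula (equivalently, termwise integration of the series), but this is routine because $G$ is analytic on $\mathbb D$ and the integration is over the compact interval $[0,1]$; the rest is bookkeeping with the Schwarz estimate.
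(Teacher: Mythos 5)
There is, in fact, no in-paper proof to compare against: Lemma \ref{l6} is quoted (together with Lemmas \ref{l3}--\ref{l7}) from the authors' preprint \cite{pj}, and this paper only uses it. Judged on its own terms, your argument is correct and complete. The key identity $\bigl(z(F(z)-zF'(z))\bigr)'=-\bigl(z^{2}F''(z)+zF'(z)-F(z)\bigr)$ is right; since $z\,u(z)$ vanishes at the origin, the representation $F(z)-zF'(z)=-\int_{0}^{1}G(tz)\,dt$ is legitimate; and because $G(z)=\sum_{n\ge 2}(n^{2}-1)a_{n}z^{n}$ has a double zero at $0$, the maximum-modulus (Schwarz-type) step gives $|G(z)|\le 3\lambda|z|^{2}$, whence $|F(z)-zF'(z)|\le\lambda|z|^{2}<\lambda$ on $\mathbb D$ — note it is precisely the factor $|z|^{2}<1$ that converts the non-strict hypothesis into the strict inequality demanded by Definition \ref{d2}. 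The sharpness argument is also sound, though worded loosely: to show $3\lambda$ cannot be replaced by any $c>3\lambda$, take $F(z)=z+(c/3)z^{2}$, for which $|z^{2}F''(z)+zF'(z)-F(z)|=c|z|^{2}\le c$ on $\mathbb D$, while $|F(z)-zF'(z)|=(c/3)|z|^{2}$ exceeds $\lambda$ for $|z|$ close to $1$, so $F\notin\Omega(\lambda)$. This quantified formulation is what ``best possible'' means here, and your construction delivers exactly that.
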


\begin{lemma}\label{l7}
Let $F_1(z)$, $F_2(z)\in \Omega(\lambda)$. Then $(F_1\ast F_2)(z)\in\Omega(\lambda)$, if $\lambda\le 1$.
\end{lemma}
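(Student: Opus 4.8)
The plan is to translate membership in $\Omega(\lambda)$ into a bound on a single power series and then exploit square-summability of Taylor coefficients. For $G(z)=z+\sum_{n\ge 2}c_nz^n\in\mathcal A$ one computes $G(z)-zG'(z)=-\sum_{n\ge 2}(n-1)c_nz^n$, so $G\in\Omega(\lambda)$ precisely when the analytic function $\Phi_G(z):=\sum_{n\ge 2}(n-1)c_nz^n$ satisfies $|\Phi_G(z)|<\lambda$ on $\mathbb D$. Writing $F_1(z)=z+\sum_{n\ge 2}a_nz^n$ and $F_2(z)=z+\sum_{n\ge 2}b_nz^n$, their convolution is $F(z)=z+\sum_{n\ge 2}a_nb_nz^n$, so the task is to bound $|\Phi_F(z)|=\bigl|\sum_{n\ge 2}(n-1)a_nb_nz^n\bigr|$ by $\lambda$ on $\mathbb D$.

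The key preliminary step is the coefficient inequality $\sum_{n\ge 2}(n-1)^2|a_n|^2\le\lambda^2$, together with its analogue for the $b_n$. This follows from $|\Phi_{F_1}(z)|<\lambda$ by applying Parseval's identity on the circle $|z|=r$ and letting $r\to 1^-$; it is here that boundedness of $\Phi_{F_1}$, and not merely the defining differential inequality, is used. With these in hand, the Cauchy--Schwarz inequality gives, for $z\in\mathbb D$,
\[
\bigl|\Phi_F(z)\bigr|\le\sum_{n\ge 2}(n-1)|a_n||b_n|\le\Bigl(\sum_{n\ge 2}(n-1)^2|a_n|^2\Bigr)^{1/2}\Bigl(\sum_{n\ge 2}|b_n|^2\Bigr)^{1/2}\le\lambda\cdot\lambda=\lambda^2,
\]
where the last estimate uses $(n-1)^2\ge 1$ for $n\ge 2$ to absorb the missing weight in the second factor. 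Since $\lambda\le 1$ we get $\lambda^2\le\lambda$, hence $|\Phi_F(z)|\le\lambda$; and the strict inequality required by Definition \ref{d2} comes for free, since if $F\not\equiv z$ then $a_mb_m\ne 0$ for some $m$ and the factor $|z|^m<1$ makes the first inequality strict, while $F\equiv z$ belongs to $\Omega(\lambda)$ trivially. Thus $F\in\Omega(\lambda)$.

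I do not expect a genuine obstacle; the only delicate points are the justification of the $\ell^2$-bound on the coefficients and the bookkeeping showing that Cauchy--Schwarz naturally produces $\lambda^2$ rather than $\lambda$, which is exactly what pins down the hypothesis $\lambda\le 1$. If one prefers to avoid Parseval, an alternative is to note that $\Phi_{F_1}/\lambda$ maps $\mathbb D$ into $\mathbb D$ and vanishes to second order, so $|\Phi_{F_1}(z)|\le\lambda|z|^2$ and, after one integration, $|F_2(z)-z|\le\lambda|z|^2$; writing $\Phi_F=\Phi_{F_1}\ast(F_2-z)$ and estimating this convolution through its integral representation again yields the factor $\lambda^2$, with the strict inequality built in.
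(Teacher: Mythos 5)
Your argument is correct and complete: Parseval applied to $\Phi_{F_1}$ and $\Phi_{F_2}$ on $|z|=r$, $r\to 1^-$, gives $\sum_{n\ge 2}(n-1)^2|a_n|^2\le\lambda^2$ and likewise for $b_n$, and Cauchy--Schwarz then bounds $|\Phi_{F_1\ast F_2}(z)|$ by $\lambda^2\le\lambda$ when $\lambda\le 1$, with strictness handled correctly via $|z|^n<1$ (and the trivial case $F\equiv z$). Note that the paper itself states this lemma without proof, citing the authors' preprint \cite{pj}; your route is the natural one and is essentially the same coefficient-based argument by which Peng and Zhong proved that $\Omega=\Omega(1/2)$ is closed under convolution, generalized to arbitrary $\lambda$, and it makes transparent why the hypothesis $\lambda\le 1$ appears (the estimate produces $\lambda^2$ rather than $\lambda$).
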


\medskip
The next section is dedicated to our main results for the class $\Omega_{\mathcal{H}}^0(\lambda)$. Here we establish a close relation between $\Omega_{\mathcal{H}}^0(\lambda)$ and $\Omega(\lambda)$. Further, we obtain some characteristic properties including radius properties, convolution, coefficient estimates and their properties, growth estimates, and convex combination for the functions in $\Omega_{\mathcal{H}}^0(\lambda)$. At last, we produce conditions for some special functions as well as harmonic univalent polynomials to belong to class $\Omega_{\mathcal{H}}^0(\lambda)$.

\medskip
\medskip
\section{Main results}
\setcounter{equation}{0}

\subsection{Association between $\Omega_{\mathcal{H}}^0(\lambda)$ and $\Omega(\lambda)$}

\begin{theorem}\label{th1}
For a harmonic function $f(z)=h(z)+\overline{g(z)}\in\Omega_{\mathcal{H}}^0(\lambda)$, there exist an analytic function $F_{\zeta}(z)=h(z)+\zeta g(z) \in \Omega(\lambda)$ and vice versa, where $|\zeta|=1$.
\end{theorem}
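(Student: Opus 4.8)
The plan is to prove the two directions of the equivalence by unwinding the defining inequalities of $\Omega_{\mathcal{H}}^0(\lambda)$ and $\Omega(\lambda)$ and relating them through the triangle inequality in the complex plane. The key observation is that if $f(z)=h(z)+\overline{g(z)}\in\Omega_{\mathcal{H}}^0(\lambda)$, then by Definition \ref{d1} we have $|h(z)-zh'(z)|+|g(z)-zg'(z)|<\lambda$ for all $z\in\mathbb{D}$. For a fixed $\zeta$ with $|\zeta|=1$, set $F_\zeta(z)=h(z)+\zeta g(z)$. Then $F_\zeta(z)-zF_\zeta'(z)=\bigl(h(z)-zh'(z)\bigr)+\zeta\bigl(g(z)-zg'(z)\bigr)$, so by the triangle inequality $|F_\zeta(z)-zF_\zeta'(z)|\le |h(z)-zh'(z)|+|\zeta||g(z)-zg'(z)|=|h(z)-zh'(z)|+|g(z)-zg'(z)|<\lambda$. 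One also checks $F_\zeta\in\mathcal{A}$: since $f\in\mathcal{H}^0$, the expansions \eqref{1.2} give $h(0)=0$, $h'(0)=1$, $g(0)=0$, $g'(0)=0$, hence $F_\zeta(0)=0$ and $F_\zeta'(0)=1$. Therefore $F_\zeta\in\Omega(\lambda)$, which is the forward direction.

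For the converse, I would start from the hypothesis that $F_\zeta(z)=h(z)+\zeta g(z)\in\Omega(\lambda)$ for \emph{all} $|\zeta|=1$ (this is how the statement should be read for the equivalence to hold, since a single $\zeta$ cannot recover information about $h$ and $g$ separately). Fix $z\in\mathbb{D}$ and choose $\zeta$ on the unit circle so that the two complex numbers $h(z)-zh'(z)$ and $\zeta\bigl(g(z)-zg'(z)\bigr)$ are positively aligned, i.e. $\zeta=\overline{(h(z)-zh'(z))}\,(g(z)-zg'(z))\big/\bigl(|h(z)-zh'(z)|\,|g(z)-zg'(z)|\bigr)$ when both quantities are nonzero (and handle the degenerate cases separately). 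For this choice, $|F_\zeta(z)-zF_\zeta'(z)|=|h(z)-zh'(z)|+|g(z)-zg'(z)|$, and since $F_\zeta\in\Omega(\lambda)$ this is $<\lambda$, giving exactly $|h(z)-zh'(z)|<\lambda-|g(z)-zg'(z)|$. Also $h\in\mathcal{A}$ and $g$ has the form in \eqref{1.2} because $F_\zeta\in\mathcal{A}$ for two different values of $\zeta$ forces $g(0)=0$, $g'(0)=0$. One must additionally verify that $f$ is sense-preserving, i.e. $f\in\mathcal{H}^0\subset\mathcal{H}$ in the intended sense; this follows from the differential inequality itself, which forces $|g'(z)|<|h'(z)|$ on $\mathbb{D}$ (indeed $\Omega_{\mathcal{H}}^0(\lambda)$-type inequalities are designed so that the dilatation is bounded by $1$), but I would state this carefully.

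The main obstacle I anticipate is the correct handling of the quantifier on $\zeta$ and the degenerate cases in the converse: the equivalence is genuinely an ``if and only if for all $|\zeta|=1$'' statement, and when either $h(z)-zh'(z)=0$ or $g(z)-zg'(z)=0$ at some point the alignment argument needs a separate (easy) treatment. A secondary point is making explicit that membership in $\mathcal{H}^0$ (the normalization $h(0)=0$, $h'(0)=1$, $g(0)=0=g'(0)$ together with the sense-preserving condition) is equivalent on both sides; this is a matter of matching the coefficient normalizations in \eqref{1.2} with the normalization defining $\mathcal{A}$, plus noting that the inequality in Definition \ref{d1} already encodes $|g(z)-zg'(z)|<\lambda$, which at $z\to 0$ and upon differentiation is harmless but should be acknowledged. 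Beyond these bookkeeping issues the argument is a direct application of the triangle inequality and the sharp choice of unimodular constant, so I do not expect any deep difficulty.
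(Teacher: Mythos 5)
Your proposal follows essentially the same route as the paper: the forward direction is the same triangle-inequality computation, and the converse is the paper's ``appropriate choice of $\zeta$'' argument made explicit as a pointwise alignment of $h(z)-zh'(z)$ and $\zeta\bigl(g(z)-zg'(z)\bigr)$, read (correctly) with the quantifier ``for all $|\zeta|=1$''; you are in fact more careful than the paper about the normalization $F_\zeta\in\mathcal{A}$ and the quantifier, which the paper's converse states loosely (even writing $|\zeta|<1$). Two small corrections: your explicit aligning constant has the conjugates swapped --- it should be $\zeta=(h(z)-zh'(z))\,\overline{(g(z)-zg'(z))}\big/\bigl(|h(z)-zh'(z)|\,|g(z)-zg'(z)|\bigr)$, since then $\zeta\bigl(g(z)-zg'(z)\bigr)$ is a positive multiple of $h(z)-zh'(z)$ --- and your parenthetical claim that the defining inequality forces $|g'(z)|<|h'(z)|$ throughout $\mathbb{D}$ is false for $\lambda>1/2$ (e.g.\ $f(z)=z+\overline{c\,z^{2}}$ with $1/2<|c|\le\lambda$ lies in $\Omega_{\mathcal{H}}^0(\lambda)$ but has $|g'(z)|>|h'(z)|$ near the boundary); it holds only for $\lambda\le 1/2$, as in \eqref{2.8}. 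Neither point is damaging, because $\mathcal{H}^0$ as defined in the paper requires only the normalizations $f(0)=f_{\overline z}(0)=0$, $f_z(0)=1$ and not sense-preservation, so that extra verification is not needed for the theorem as stated.
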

\begin{proof}
Let $f(z)=h(z)+\overline{g(z)}\in\Omega_{\mathcal{H}}^0(\lambda)$ and $F_{\zeta}(z)=h(z)+\zeta g(z)$, for $z\in \mathbb{D}$, where $|\zeta|=1$. Using Definition \ref{d1}, and for each $\zeta$, we can write
\begin{align*}
\left|F_{\zeta}(z)-zF'_{\zeta}(z)\right|&= \left|h(z)+\zeta g(z)-zh'(z)-z\zeta g'(z)\right| \\
&\le \left| h(z)-zh'(z) \right| + \left|g(z)-zg'(z) \right|<\lambda,
\end{align*}
for any $\lambda>0$ and $z\in \mathbb{D}$. This implies  $F_{\zeta}(z) \in \Omega(\lambda)$.

Conversely, let $F_{\zeta}(z)=h(z)+\zeta g(z) \in \Omega(\lambda)$ for any $\zeta$, where $|\zeta|<1$. Thus, for $z\in \mathbb{D}$
\[\left|F_{\zeta}(z)-zF'_{\zeta}(z)\right|=\left|h(z)+\zeta g(z)-zh'(z)-z\zeta g'(z)\right|<\lambda.\]
Now, for an appropriate choice of $\zeta$, we obtain
\[\left| h(z)-zh'(z) \right| + \left|g(z)-zg'(z) \right|<\lambda,\]
for any $\lambda>0$ and $z\in \mathbb{D}$. Therefore, $f(z)=h(z)+\overline{g(z)}\in\Omega_{\mathcal{H}}^0(\lambda)$. These complete the proof.
\end{proof}

Using Lemma \ref{l3}, Lemma \ref{l4} for a function $F_{\zeta}(z)=h(z)+\zeta g(z) \in \Omega(\lambda)$, where $|\zeta|=1$, and Theorem \ref{th1} in Lemma \ref{l2}, we yield the following results on radius properties of $\Omega_{\mathcal{H}}^0(\lambda)$.
\begin{corollary}\label{c1}
Let $f(z)=h(z)+\overline{g(z)}\in\Omega_{\mathcal{H}}^0(\lambda)$. Then $f(z)$ is stable harmonic univalent (respectively, starlike) in $|z|<1/2\lambda$. In particular, a harmonic function $f(z)\in\Omega_{\mathcal{H}}^0(\lambda)$ is univalent (respectively, starlike) in $|z|<1/2\lambda$.
\end{corollary}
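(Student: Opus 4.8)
The plan is to chain together the results that have already been assembled in the excerpt, so the proof is essentially a bookkeeping exercise. First I would fix $f(z)=h(z)+\overline{g(z)}\in\Omega_{\mathcal{H}}^0(\lambda)$ and invoke Theorem~\ref{th1}: for every $\zeta$ with $|\zeta|=1$ the analytic companion $F_{\zeta}(z)=h(z)+\zeta g(z)$ lies in $\Omega(\lambda)$. Then I would apply Lemma~\ref{l3} and Lemma~\ref{l4} to each $F_{\zeta}$: the radius of starlikeness of $\Omega(\lambda)$ is $1/2\lambda$, and every member of $\Omega(\lambda)$ is univalent in $|z|<1/2\lambda$. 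Since this holds \emph{uniformly} for all $|\zeta|=1$, the analytic function $F(z)=h(z)+g(z)$ (the case $\zeta=1$) together with all its rotates $F_\zeta$ are starlike (respectively univalent) in $|z|<1/2\lambda$; in the terminology of \S1.1 this says precisely that $F(z)=h(z)+g(z)$ is stable analytic starlike (respectively stable analytic univalent) on the disk $|z|<1/2\lambda$.

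Next I would feed this into Lemma~\ref{l2}. Since $f$ is sense-preserving and $F(z)=h(z)+g(z)$ is stable analytic univalent on $|z|<1/2\lambda$, Lemma~\ref{l2} yields that $f$ is stable harmonic univalent there. For the starlike part, I need to check the extra hypothesis of Lemma~\ref{l2}, namely that the functions $f_\zeta(z)=h(z)+\zeta\overline{g(z)}$ fix the origin; but $f\in\mathcal{H}^0\subset\mathcal{H}$ forces $h(0)=g(0)=0$, so $f_\zeta(0)=0$ for every $\zeta$, and the hypothesis is met. Hence $f$ is stable harmonic starlike on $|z|<1/2\lambda$ as well. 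Finally, since stable harmonic univalence (respectively starlikeness) includes the case $\zeta=1$, which is $f$ itself, the ``in particular'' clause follows immediately: $f$ is univalent (respectively starlike) in $|z|<1/2\lambda$.

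One small technical point I would be careful about is that Lemmas~\ref{l3}, \ref{l4} and \ref{l2} are stated on $\mathbb{D}$, whereas here I am applying them on the sub-disk $|z|<1/2\lambda$ (which may be larger or smaller than $\mathbb{D}$ depending on $\lambda$); the radius statements in Lemma~\ref{l3}–\ref{l4} are intrinsically about this sub-disk, and Lemma~\ref{l2} is a pointwise/local equivalence that transfers verbatim to any disk centred at the origin, so no real difficulty arises. I do not expect any genuine obstacle in this proof — it is a direct concatenation of Theorem~\ref{th1}, Lemma~\ref{l3}, Lemma~\ref{l4} and Lemma~\ref{l2}. The only thing requiring a moment's thought is verifying the origin-fixing condition needed for the starlike half of Lemma~\ref{l2}, which, as noted, is automatic for $f\in\mathcal{H}^0$.
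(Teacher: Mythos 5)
Your argument is exactly the paper's own route: the paper derives Corollary~\ref{c1} by combining Theorem~\ref{th1} with Lemmas~\ref{l3} and \ref{l4} applied to $F_{\zeta}(z)=h(z)+\zeta g(z)$ and then invoking Lemma~\ref{l2}, which is precisely your chain, and you even spell out the origin-fixing hypothesis for the starlike case that the paper leaves implicit. No gap beyond the paper's own level of detail (e.g.\ both treatments take the sense-preserving hypothesis of Lemma~\ref{l2} for granted on the relevant sub-disk), so the proposal is correct and essentially identical to the paper's proof.
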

\begin{corollary}\label{c2}
Let $f(z)=h(z)+\overline{g(z)}\in\Omega_{\mathcal{H}}^0(\lambda)$. Then $f(z)$ is stable harmonic convex in $|z|<1/4\lambda$. In particular, a harmonic function $f(z)\in\Omega_{\mathcal{H}}^0(\lambda)$ is convex in $|z|<1/4\lambda$.
\end{corollary}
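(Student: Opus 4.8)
The plan is to proceed exactly as in the proof of Corollary \ref{c1}, only replacing the univalence/starlikeness radius $1/2\lambda$ by the convexity radius $1/4\lambda$ furnished by Lemma \ref{l4}. Fix $f(z)=h(z)+\overline{g(z)}\in\Omega_{\mathcal{H}}^0(\lambda)$. By Theorem \ref{th1}, for every $\zeta$ with $|\zeta|=1$ the analytic function $F_{\zeta}(z)=h(z)+\zeta g(z)$ belongs to $\Omega(\lambda)$; in particular $F(z)=h(z)+g(z)=F_1(z)\in\Omega(\lambda)$. I would first note that $f$ is sense-preserving on $|z|<1/2\lambda$, hence on the smaller disk $|z|<1/4\lambda$: applying the lower estimate in \eqref{1.5} to $F_{\zeta}$ gives $|h'(z)+\zeta g'(z)|\ge 1-2\lambda|z|$ for all $|\zeta|=1$, and minimising the left-hand side over $\zeta$ yields $|h'(z)|-|g'(z)|\ge 1-2\lambda|z|>0$ when $|z|<1/2\lambda$, i.e. $J_f(z)>0$ there.

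Next, by Lemma \ref{l4} each $F_{\zeta}$ with $|\zeta|=1$ is convex in $|z|<1/4\lambda$; by the definition of stability this is exactly the statement that the analytic function $F=h+g$ is stable analytic convex on $|z|<1/4\lambda$. To be able to invoke Lemma \ref{l2}, which is phrased on $\mathbb{D}$, I would transfer to the unit disk by the usual dilation. When $1/4\lambda\le 1$, put $r=1/4\lambda$ and consider $\widehat f(z)=\tfrac1r f(rz)$, whose analytic and co-analytic parts are $\widehat h(z)=\tfrac1r h(rz)$ and $\widehat g(z)=\tfrac1r g(rz)$; then $\widehat f\in\mathcal{H}^0$, $\widehat f$ is sense-preserving on $\mathbb{D}$ by the previous paragraph, and $\widehat h+\widehat g=\tfrac1r F(rz)$ is stable analytic convex on $\mathbb{D}$. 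When $1/4\lambda>1$ one simply works on $\mathbb{D}$ directly with $f$ itself.

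Finally, apply Lemma \ref{l2} to the sense-preserving harmonic function on $\mathbb{D}$ just produced: since its analytic counterpart is stable analytic convex on $\mathbb{D}$, the harmonic function is stable harmonic convex on $\mathbb{D}$; undoing the dilation gives that $f$ is stable harmonic convex in $|z|<1/4\lambda$. Taking $\zeta=1$ in the definition of stable harmonic convexity, so that $f_{\zeta}(z)=h(z)+\overline{g(z)}=f(z)$, then shows in particular that $f$ is convex in $|z|<1/4\lambda$, which is the assertion. The one point requiring care is the bookkeeping in pushing Lemma \ref{l2} from $\mathbb{D}$ down to the subdisk $|z|<1/4\lambda$ and checking that sense-preservation is genuinely available on that subdisk; apart from that the proof is a routine concatenation of Theorem \ref{th1}, Lemma \ref{l4} and Lemma \ref{l2}, exactly paralleling Corollary \ref{c1}.
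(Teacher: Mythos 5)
Your argument is correct and follows essentially the same route as the paper, which obtains Corollary \ref{c2} precisely by combining Theorem \ref{th1}, Lemma \ref{l4} (convexity of each $F_{\zeta}$ in $|z|<1/4\lambda$), and Lemma \ref{l2}. The only difference is that you make explicit the sense-preservation check and the dilation needed to apply Lemma \ref{l2} on a subdisk, details the paper leaves implicit.
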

\begin{corollary}\label{c3}
Let $f(z)=h(z)+\overline{g(z)}\in\Omega_{\mathcal{H}}^0(\lambda)$. Then $f(z)$ is stable harmonic univalent (respectively, starlike) in $\mathbb{D}$, and is stable harmonic convex in $\mathbb{D}_{1/2}$, if $\lambda\le 1/2$. In particular, a harmonic function $f(z)\in\Omega_{\mathcal{H}}^0(\lambda)$ is univalent (respectively, starlike) in $\mathbb{D}$, and is convex in $\mathbb{D}_{1/2}$, when $\lambda\le 1/2$.
\end{corollary}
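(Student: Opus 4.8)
The plan is to combine Corollary \ref{c1} and Corollary \ref{c2} with the second and third sentences of Lemma \ref{l3} (the growth and starlikeness-radius statements for $\Omega(\lambda)$), specialized to the endpoint case $\lambda \le 1/2$. First I would observe that Corollary \ref{c1} already gives stable harmonic univalence and starlikeness in the disk $|z| < 1/2\lambda$, and Corollary \ref{c2} gives stable harmonic convexity in $|z| < 1/4\lambda$. Hence the only thing to check is that when $\lambda \le 1/2$ these radii cover the asserted disks: $1/2\lambda \ge 1$ and $1/4\lambda \ge 1/2$, both of which are immediate from $\lambda \le 1/2$. Since $\mathbb{D}_{1/2} \subseteq \mathbb{D}$ and the radius of stable harmonic convexity is at least $1/2$ while the radius of stable harmonic univalence/starlikeness is at least $1$, the conclusion follows.

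For a fully self-contained argument one could instead re-derive it through $\Omega(\lambda)$ directly: by Theorem \ref{th1}, $f = h + \overline{g} \in \Omega_{\mathcal{H}}^0(\lambda)$ corresponds to $F_{\zeta} = h + \zeta g \in \Omega(\lambda)$ for every $|\zeta| = 1$. By Lemma \ref{l4}, each such $F_{\zeta}$ is univalent in $|z| < 1/2\lambda$ and convex in $|z| < 1/4\lambda$; when $\lambda \le 1/2$ these contain $\mathbb{D}$ and $\mathbb{D}_{1/2}$ respectively, so every $F_{\zeta}$ is univalent (indeed starlike, by the radius-of-starlikeness part of Lemma \ref{l3}, since $1/2\lambda \ge 1$) on $\mathbb{D}$ and convex on $\mathbb{D}_{1/2}$. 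Then Lemma \ref{l2} transfers these properties to $f$: stable harmonic univalence and convexity follow directly, and stable harmonic starlikeness follows since the $f_{\zeta}(z) = h(z) + \zeta \overline{g(z)}$ fix the origin (as $h(0) = g(0) = 0$ for $f \in \mathcal{H}^0$). The final "in particular" clause is just the special case $\zeta = 1$ (or $\zeta = -1$, depending on orientation), recalling that a stable harmonic property in particular implies the corresponding plain harmonic property for $f = h + \overline{g}$.

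There is essentially no obstacle here; the corollary is a direct specialization of the two preceding corollaries (equivalently, of Lemmas \ref{l2}--\ref{l4}) to the parameter range $\lambda \le 1/2$, and the only "computation" is the trivial comparison of radii $1/2\lambda \ge 1$ and $1/4\lambda \ge 1/2$. The one point worth stating explicitly is why the starlikeness claim is legitimate on all of $\mathbb{D}$: this uses that the radius of starlikeness of $\Omega(\lambda)$ equals $1/2\lambda$ (Lemma \ref{l3}), which is $\ge 1$ precisely when $\lambda \le 1/2$, together with the origin-fixing hypothesis in Lemma \ref{l2} needed for the starlike transfer. I would write the proof in two or three lines citing Corollaries \ref{c1} and \ref{c2} and noting the radius inequalities.
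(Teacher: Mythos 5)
Your proposal is correct and follows essentially the same route as the paper, which likewise obtains Corollary \ref{c3} by combining Theorem \ref{th1} with Lemmas \ref{l2}, \ref{l3} and \ref{l4} (equivalently, by specializing Corollaries \ref{c1} and \ref{c2}) and noting that $\lambda\le 1/2$ makes the radii $1/2\lambda\ge 1$ and $1/4\lambda\ge 1/2$. Nothing further is needed.
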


Using Lemma \ref{l5} for a function $F_{\zeta}(z)=h(z)+\zeta g(z) \in \mathcal{A}$, where $|\zeta|=1$, and Theorem \ref{th1}, we yield the following result.
\begin{corollary}\label{c4}
Let $F_{\zeta}(z)=h(z)+\zeta g(z) \in \mathcal{A}$, where $|\zeta|=1$. If $|F''_{\zeta}(z)|\leq 2\lambda$ then $f(z)=h(z)+\overline{g(z)}\in\Omega_{\mathcal{H}}^0(\lambda)$. The number $2\lambda$ is the best possible.
\end{corollary}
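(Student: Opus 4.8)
The plan is to read this off directly from Lemma~\ref{l5} and Theorem~\ref{th1}, since the statement is just the harmonic reformulation of the analytic one. First I would fix $\zeta$ with $|\zeta|=1$ and regard $F_{\zeta}(z)=h(z)+\zeta g(z)$ as the given element of $\mathcal{A}$; the hypothesis $|F_{\zeta}''(z)|\le 2\lambda$ on $\mathbb{D}$ is precisely the hypothesis of Lemma~\ref{l5}, which therefore gives $F_{\zeta}\in\Omega(\lambda)$. As this holds for every $\zeta$ on the unit circle, the converse (``vice versa'') part of Theorem~\ref{th1} then yields $f(z)=h(z)+\overline{g(z)}\in\Omega_{\mathcal{H}}^0(\lambda)$. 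So the argument is two invocations and I expect no genuine difficulty; the one point deserving a line of care is that the converse in Theorem~\ref{th1} is written with $|\zeta|$ near $1$, so one should note that having $F_{\zeta}\in\Omega(\lambda)$ for \emph{all} $|\zeta|=1$ is what forces $|h(z)-zh'(z)|+|g(z)-zg'(z)|<\lambda$, upon picking $\zeta$ to put $\zeta\bigl(g(z)-zg'(z)\bigr)$ in phase with $h(z)-zh'(z)$.

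If a self-contained proof is wanted, and to make the origin of the constant $2\lambda$ transparent, I would instead argue by integration. Since $h$ and $g$ are normalized, $\frac{d}{dt}\bigl(h(t)-th'(t)\bigr)=-t\,h''(t)$ and likewise for $g$, so $h(z)-zh'(z)=-\int_{0}^{z}t\,h''(t)\,dt$ and $g(z)-zg'(z)=-\int_{0}^{z}t\,g''(t)\,dt$. A phase-aligning choice of $\zeta$ gives $|h''(z)|+|g''(z)|=\max_{|\zeta|=1}|h''(z)+\zeta g''(z)|=\max_{|\zeta|=1}|F_{\zeta}''(z)|\le 2\lambda$, and integrating along $[0,z]$ yields $|h(z)-zh'(z)|+|g(z)-zg'(z)|\le\lambda|z|^2<\lambda$ for $z\in\mathbb{D}$, which is Definition~\ref{d1}.

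For ``$2\lambda$ is the best possible'' I would exhibit the extremal already used in Lemma~\ref{l5}: take $g\equiv 0$ and $h(z)=z+\lambda z^2$, so $F_{\zeta}=h$ for every $\zeta$ with $|F_{\zeta}''|\equiv 2\lambda$, while $|h(z)-zh'(z)|=\lambda|z|^2<\lambda$, hence $f=h\in\Omega_{\mathcal{H}}^0(\lambda)$. Replacing the coefficient $\lambda$ by $\lambda+\varepsilon$ keeps $|F_{\zeta}''|\equiv 2(\lambda+\varepsilon)$ but makes $|h(z)-zh'(z)|=(\lambda+\varepsilon)|z|^2\to\lambda+\varepsilon>\lambda$ as $|z|\to 1$, so the perturbed function is no longer in $\Omega_{\mathcal{H}}^0(\lambda)$; thus the bound $2\lambda$ cannot be enlarged. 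The only real bookkeeping obstacle anywhere is the handling of $\zeta$ on the unit circle mentioned above.
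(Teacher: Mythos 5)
Your main argument is exactly the paper's: the paper proves this corollary in one line by combining Lemma \ref{l5} (to get $F_{\zeta}\in\Omega(\lambda)$ for every $|\zeta|=1$) with the converse direction of Theorem \ref{th1}, which is precisely your first paragraph, including the phase-aligning choice of $\zeta$. Your self-contained integration argument and the explicit extremal $h(z)=z+\lambda z^2$ (perturbed to $\lambda+\varepsilon$) for sharpness are correct and supply details the paper leaves implicit, since it simply inherits the sharpness of $2\lambda$ from Lemma \ref{l5}.
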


In similar, using Lemma \ref{l6} for a function $F_{\zeta}(z)=h(z)+\zeta g(z) \in \mathcal{A}$, where $|\zeta|=1$, and Theorem \ref{th1}, we yield the following result.
\begin{corollary}\label{c5}
Let $F_{\zeta}(z)=h(z)+\zeta g(z) \in \mathcal{A}$, where $|\zeta|=1$. If 
\begin{equation*}
|z^2F_{\zeta}''(z)+zF_{\zeta}'(z)-F_{\zeta}(z)|\leq 3\lambda,
\end{equation*}
then $f(z)=h(z)+\overline{g(z)}\in\Omega_{\mathcal{H}}^0(\lambda)$. The number $3\lambda$ is best possible.
\end{corollary}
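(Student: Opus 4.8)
The plan is to deduce Corollary~\ref{c5} directly from Lemma~\ref{l6} via the bridge established in Theorem~\ref{th1}, exactly mirroring the derivation of Corollary~\ref{c4} from Lemma~\ref{l5}. First I would fix $\zeta$ with $|\zeta|=1$ and set $F_{\zeta}(z)=h(z)+\zeta g(z)$, which lies in $\mathcal{A}$ by hypothesis (since $h(z)=z+\sum_{n\ge 2}a_nz^n$ and $g(z)=\sum_{n\ge 2}b_nz^n$ give $F_{\zeta}(0)=0$, $F_{\zeta}'(0)=1$). The assumed inequality $|z^2F_{\zeta}''(z)+zF_{\zeta}'(z)-F_{\zeta}(z)|\le 3\lambda$ on $\mathbb{D}$ is precisely the hypothesis of Lemma~\ref{l6} applied to $F_{\zeta}$, so Lemma~\ref{l6} yields $F_{\zeta}(z)\in\Omega(\lambda)$.

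Next I would invoke the converse direction of Theorem~\ref{th1}: since $F_{\zeta}(z)=h(z)+\zeta g(z)\in\Omega(\lambda)$, choosing the phase of $\zeta$ appropriately so that the two terms $|h(z)-zh'(z)|$ and $|\zeta||g(z)-zg'(z)| = |g(z)-zg'(z)|$ align, one obtains $|h(z)-zh'(z)|+|g(z)-zg'(z)|<\lambda$ for all $z\in\mathbb{D}$, hence $|h(z)-zh'(z)|<\lambda-|g(z)-zg'(z)|$, which is exactly the defining inequality of $\Omega_{\mathcal{H}}^0(\lambda)$ from Definition~\ref{d1}. Therefore $f(z)=h(z)+\overline{g(z)}\in\Omega_{\mathcal{H}}^0(\lambda)$.

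For the sharpness claim, I would exhibit the extremal function. Lemma~\ref{l6} asserts $3\lambda$ is best possible, so there is a function realizing the bound; a natural candidate is obtained from the extremal for $\Omega(\lambda)$, namely $F_{\zeta}(z)=z+\lambda\eta z^2$ (as in Lemmas~\ref{l3}--\ref{l4}) with the co-analytic part carrying the weight. Concretely, take $g(z)=0$ (so $b_n=0$) and $h(z)=z+\lambda z^2$; then $F_{\zeta}(z)=h(z)=z+\lambda z^2$ and $z^2F_{\zeta}''+zF_{\zeta}'-F_{\zeta}=2\lambda z^2 + z + 2\lambda z^2 - z - \lambda z^2 = 3\lambda z^2$, whose modulus approaches $3\lambda$ as $|z|\to 1$, showing the constant cannot be enlarged; the corresponding harmonic function $f(z)=z+\lambda z^2\in\Omega_{\mathcal{H}}^0(\lambda)$ shows the statement is tight.

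I do not anticipate a genuine obstacle here: the corollary is a routine transfer of Lemma~\ref{l6} through Theorem~\ref{th1}, and the only point demanding a word of care is the phrase ``appropriate choice of $\zeta$'' in the converse of Theorem~\ref{th1}---one should note that the bound $|z^2F_{\zeta}''(z)+zF_{\zeta}'(z)-F_{\zeta}(z)|\le 3\lambda$ is assumed to hold for a given $\zeta$ with $|\zeta|=1$ (or, more safely, for all such $\zeta$), so that Lemma~\ref{l6} genuinely applies and the rotation argument used to split the moduli is legitimate. The rest is a one-line substitution plus the explicit extremal function for sharpness.
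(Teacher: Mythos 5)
Your main derivation is exactly the paper's route: the paper offers no separate proof of this corollary, stating only that it follows ``using Lemma \ref{l6} for $F_{\zeta}(z)=h(z)+\zeta g(z)$ and Theorem \ref{th1},'' which is precisely your two steps (Lemma \ref{l6} gives $F_{\zeta}\in\Omega(\lambda)$, the converse half of Theorem \ref{th1} transfers this to $f\in\Omega_{\mathcal{H}}^0(\lambda)$). Your caveat that the differential bound must be assumed for \emph{all} $\zeta$ with $|\zeta|=1$ (so that the pointwise rotation used to split $|h-zh'|+|g-zg'|$ is available) is well taken; the paper's statement and its Theorem \ref{th1} share this imprecision, so you are aligned with the source there.

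The one genuine flaw is your sharpness argument. Exhibiting $h(z)=z+\lambda z^2$, $g\equiv 0$, for which $|z^2F''+zF'-F|=3\lambda|z|^2$ approaches $3\lambda$ while $f$ still lies in $\Omega_{\mathcal{H}}^0(\lambda)$, shows only that the bound can be attained in the limit by a member of the class; it does not show that the constant $3\lambda$ cannot be enlarged, which is what ``best possible'' means. To prove that, you must show the conclusion fails for every larger constant: take $\mu>\lambda$, $h(z)=z+\mu z^2$, $g\equiv 0$. Then $|z^2h''(z)+zh'(z)-h(z)|=3\mu|z|^2\le 3\mu$, and $3\mu$ can be made arbitrarily close to $3\lambda$, yet $|h(z)-zh'(z)|=\mu|z|^2>\lambda$ for $|z|$ near $1$, so $f=h\notin\Omega_{\mathcal{H}}^0(\lambda)$. (The paper itself does not prove sharpness of the corollary either; it simply inherits the claim from Lemma \ref{l6}, so this correction goes slightly beyond what the source records, but your stated argument as written does not establish the claim.)
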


We yield the following result for convolution of functions in $\Omega_{\mathcal{H}}^0(\lambda)$ using Lemma \ref{l7} and Theorem \ref{th1}.
\begin{corollary}\label{c6}
Let $f_1(z),\; f_2(z)\in\Omega_{\mathcal{H}}^0(\lambda)$. Then $(f_1\ast f_2)(z)\in\Omega_{\mathcal{H}}^0(\lambda)$, if $\lambda \le 1$.
\end{corollary}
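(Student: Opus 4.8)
The plan is to reduce Corollary~\ref{c6} to the already-established convolution result for the analytic class $\Omega(\lambda)$, namely Lemma~\ref{l7}, via the correspondence of Theorem~\ref{th1}. The key observation is that the harmonic convolution is defined componentwise on analytic and co-analytic parts, so it interacts cleanly with the substitution $f \mapsto F_\zeta = h + \zeta g$.

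First I would write $f_1(z) = h_1(z) + \overline{g_1(z)}$ and $f_2(z) = h_2(z) + \overline{g_2(z)}$ in $\Omega_{\mathcal{H}}^0(\lambda)$, so that by definition $(f_1 \ast f_2)(z) = (h_1 \ast h_2)(z) + \overline{(g_1 \ast g_2)(z)}$. Next, fix $|\zeta| = 1$ and consider the associated analytic functions; the natural candidates are $F_{1,\zeta}(z) = h_1(z) + \zeta g_1(z)$ and $F_{2,\zeta}(z) = h_2(z) + \zeta g_2(z)$, which lie in $\Omega(\lambda)$ by Theorem~\ref{th1}. Applying Lemma~\ref{l7} (valid since $\lambda \le 1$), the analytic convolution $(F_{1,\zeta} \ast F_{2,\zeta})(z)$ belongs to $\Omega(\lambda)$. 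The main point is then to check that this convolution equals the analytic function $F_\zeta(z) := (h_1 \ast h_2)(z) + \zeta\,(g_1 \ast g_2)(z)$ associated with $f_1 \ast f_2$. This is a routine coefficient computation: the convolution is a diagonal (coefficient-wise) operation, and since $|\zeta| = 1$ one has $\zeta^2$ appearing against the $g_1 g_2$ coefficients — so a moment of care is needed here, because strictly $(F_{1,\zeta} \ast F_{2,\zeta})$ produces $\zeta^2 (g_1 \ast g_2)$, not $\zeta (g_1 \ast g_2)$. The clean fix is to instead take $F_{2,\zeta}$ with the conjugate parameter, i.e. work with $F_{1,\zeta} \ast F_{2,\bar\zeta}$, or equivalently to note that $\zeta^2$ again has modulus one and invoke the ``appropriate choice of $\zeta$'' clause in the converse direction of Theorem~\ref{th1}; either way $F_\zeta \in \Omega(\lambda)$ for all $|\zeta|=1$.

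Finally, having shown $F_\zeta(z) = h_\ast(z) + \zeta g_\ast(z) \in \Omega(\lambda)$ for every unit $\zeta$, where $h_\ast = h_1 \ast h_2$ and $g_\ast = g_1 \ast g_2$, I would apply the converse half of Theorem~\ref{th1} to conclude that $(f_1 \ast f_2)(z) = h_\ast(z) + \overline{g_\ast(z)} \in \Omega_{\mathcal{H}}^0(\lambda)$. One should also record that $f_1 \ast f_2$ still lies in $\mathcal{H}^0$ (its analytic part is normalized since $h_1, h_2$ each have leading coefficient $1$, and the co-analytic part has no constant or linear term because $g_1, g_2$ start at $n=2$), so the membership in the ambient class is not an issue.

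I expect the only genuine obstacle to be the bookkeeping with the parameter $\zeta$ in the convolution step — making sure that the square $\zeta^2$ does not break the argument — and this is resolved entirely by the fact that $|\zeta|=1 \iff |\zeta^2|=1$ together with the flexibility in choosing $\zeta$ afforded by Theorem~\ref{th1}. Everything else is a direct chaining of Theorem~\ref{th1} and Lemma~\ref{l7}.
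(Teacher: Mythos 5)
Your overall strategy---chaining Theorem \ref{th1} with Lemma \ref{l7}---is exactly the route the paper indicates (the paper offers only this one-line justification and no further details), but the step you yourself flag as ``routine'' is precisely where the argument breaks, and your proposed fix does not repair it. Convolution is bilinear, so for unimodular $\zeta,\zeta'$ one has $(h_1+\zeta g_1)\ast(h_2+\zeta' g_2)=h_1\ast h_2+\zeta'\,(h_1\ast g_2)+\zeta\,(g_1\ast h_2)+\zeta\zeta'\,(g_1\ast g_2)$. Your computation silently drops the two cross terms $h_1\ast g_2$ and $g_1\ast h_2$, which do not vanish in general (take, e.g., $h_1=z+\tfrac{\lambda}{2}z^2$, $g_1=0$, $h_2=z$, $g_2=-\tfrac{\lambda}{2}z^2$). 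Consequently $F_{1,\zeta}\ast F_{2,\zeta}$ is not of the form $(h_1\ast h_2)+\eta\,(g_1\ast g_2)$ for any unimodular $\eta$; neither replacing the second parameter by $\overline{\zeta}$ nor the observation that $|\zeta^2|=1$ helps, because the obstruction is the presence of the cross terms, not the exponent on $\zeta$. So Lemma \ref{l7} applied to $F_{1,\zeta}\ast F_{2,\zeta'}$ says nothing about the function $F_\zeta=(h_1\ast h_2)+\zeta\,(g_1\ast g_2)$ that the converse half of Theorem \ref{th1} requires, and the proof as written does not go through.

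The gap is repairable, but it needs one extra idea. For fixed $|\zeta|=1$ you can average out the cross terms: $\frac{1}{2\pi}\int_0^{2\pi}\bigl[(h_1+e^{it}g_1)\ast(h_2+\zeta e^{-it}g_2)\bigr]\,dt=(h_1\ast h_2)+\zeta\,(g_1\ast g_2)$, since the cross terms carry a factor $e^{\pm it}$ and integrate to zero. Each integrand lies in $\Omega(\lambda)$ by Theorem \ref{th1} and Lemma \ref{l7} (this is where $\lambda\le 1$ enters), the normalization is preserved because $g_1,g_2$ begin at $z^2$, and the defining condition $|F(z)-zF'(z)|<\lambda$ survives such an integral mean at each fixed $z$ (the averaged quantity lies in the convex disk $\{|w|<\lambda\}$, with continuity in $t$ on the compact interval giving strictness). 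This yields $F_\zeta\in\Omega(\lambda)$ for every unimodular $\zeta$, after which the converse direction of Theorem \ref{th1} finishes the argument exactly as you intended. To be fair, the paper's own proof is the same unelaborated chaining of Theorem \ref{th1} and Lemma \ref{l7} and does not confront the cross-term issue either, but as it stands your write-up contains this genuine hole.
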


\begin{remark} Following Corollary \ref{c6},  if $f_2(z)=F(z)+\zeta \overline{F(z)}$, where $F(z)$ is a convex function in $\mathcal{A}$ for $|\zeta|=1$, then $f_1(z)\ast (F(z)+\zeta \overline{F(z)})\in\Omega_{\mathcal{H}}^0(\lambda)$, for $\lambda \le 1$.
\end{remark}

\medskip
\subsection{Coefficient estimates and their properties}
\begin{theorem}\label{th2}
For any $f(z)=h(z)+\overline{g(z)}\in\Omega_{\mathcal{H}}^0(\lambda)$ with the form as in \eqref{1.2} of $h(z)$ and $g(z)$, the following coefficient bounds hold true.
\[| a_n|\le\dfrac{\lambda}{n-1}\quad\text{and}\quad | b_n|\le\dfrac{\lambda}{n-1}, \]
for $n\in \mathbb{N}\setminus \{1 \}$. Sharpness of both bounds holds.
\end{theorem}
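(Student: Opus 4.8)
\medskip
\noindent\textbf{Proof proposal.} The plan is to read off the coefficient information straight from the differential expression that defines $\Omega_{\mathcal{H}}^0(\lambda)$. First I would insert the power series \eqref{1.2} into $h(z)-zh'(z)$ and $g(z)-zg'(z)$. Using $zh'(z)=z+\sum_{n\ge 2}na_nz^n$ and $zg'(z)=\sum_{n\ge 2}nb_nz^n$, this gives
\[
h(z)-zh'(z)=-\sum_{n=2}^{\infty}(n-1)a_nz^n,\qquad g(z)-zg'(z)=-\sum_{n=2}^{\infty}(n-1)b_nz^n .
\]
By Definition \ref{d1}, both of these analytic functions have modulus strictly less than $\lambda$ on $\mathbb{D}$: the first because $|h-zh'|<\lambda-|g-zg'|\le\lambda$, and the second by the symmetric estimate $|g-zg'|<\lambda-|h-zh'|\le\lambda$.

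Next I would apply the Cauchy coefficient estimate to each of these bounded analytic functions. If $\phi(z)=\sum_{k\ge 2}c_kz^k$ is analytic in $\mathbb{D}$ with $|\phi(z)|<\lambda$, then for every $r\in(0,1)$ one has $|c_k|=\left|\tfrac{1}{2\pi i}\oint_{|z|=r}\phi(z)z^{-k-1}\,dz\right|\le\lambda r^{-k}$, and letting $r\to 1^-$ yields $|c_k|\le\lambda$. Taking $c_n=-(n-1)a_n$ gives $|a_n|\le\lambda/(n-1)$, and taking $c_n=-(n-1)b_n$ gives $|b_n|\le\lambda/(n-1)$, for all $n\in\mathbb{N}\setminus\{1\}$. (Alternatively one may route through Theorem \ref{th1}: since $F_\zeta=h+\zeta g\in\Omega(\lambda)$ for $|\zeta|=1$, the same estimate applied to $F_\zeta-zF_\zeta'=-\sum_{n\ge 2}(n-1)(a_n+\zeta b_n)z^n$ gives $|a_n+\zeta b_n|\le\lambda/(n-1)$ for every $|\zeta|=1$, and choosing $\zeta$ so that $\zeta b_n$ and $a_n$ have the same argument even yields the stronger $|a_n|+|b_n|\le\lambda/(n-1)$.)

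For the sharpness I would exhibit explicit extremal members. Fix $n\ge 2$. The function $f(z)=z+\tfrac{\lambda}{n-1}z^{n}$ (so $h(z)=z+\tfrac{\lambda}{n-1}z^n$, $g\equiv 0$) satisfies $h(z)-zh'(z)=-\lambda z^n$ and $g(z)-zg'(z)=0$, hence $|h-zh'|=\lambda|z|^n<\lambda=\lambda-|g-zg'|$ on $\mathbb{D}$, so $f\in\Omega_{\mathcal{H}}^0(\lambda)$ and $|a_n|=\lambda/(n-1)$; exchanging the roles of $h$ and $g$, the function $f(z)=z+\overline{\tfrac{\lambda}{n-1}z^{n}}$ lies in $\Omega_{\mathcal{H}}^0(\lambda)$ and realises $|b_n|=\lambda/(n-1)$. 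I do not anticipate a real obstacle here; the only points requiring a little care are the limiting step $r\to 1^-$ in the Cauchy estimate (so that the strict inequality on $\mathbb{D}$ still produces a non-strict bound on the coefficients) and the verification that the proposed extremal functions actually satisfy the defining inequality of Definition \ref{d1}.
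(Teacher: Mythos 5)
Your proposal is correct and follows essentially the same route as the paper: the Cauchy coefficient estimate applied to the bounded analytic functions $zh'(z)-h(z)$ and $zg'(z)-g(z)$, followed by the same extremal functions $z+\tfrac{\lambda}{n-1}z^n$ and $z+\overline{\tfrac{\lambda}{n-1}z^n}$ for sharpness. Your direct use of the symmetric bounds $|h-zh'|<\lambda$ and $|g-zg'|<\lambda$ is a slightly cleaner phrasing of the paper's integral manipulation, and your parenthetical observation that the route through $F_\zeta\in\Omega(\lambda)$ even yields $|a_n|+|b_n|\le\lambda/(n-1)$ is a nice strengthening the paper does not record.
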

\begin{proof}
Let $f(z)\in\Omega_{\mathcal{H}}^0(\lambda)$. Note that, $zh'(z)-h(z)$ is analytic in $\mathbb{D}$ and from \eqref{1.2}, we obtain
\begin{equation}\label{2.1}
zh'(z)-h(z)=\sum_{n=2}^{\infty}(n-1) a_nz^n.
\end{equation}
Using Cauchy’s integral formula, for $z=re^{i\theta}\in \mathbb{D}$, we obtain 
\[(n-1)| a_n|=\left|\dfrac{1}{2\pi i}\int_{|z|=r}\dfrac{zh'(z)-h(z)}{z^{n+1}}dz\right|\le \dfrac{1}{2\pi}\int_{0}^{2\pi}\dfrac{\left|re^{i\theta}h'(re^{i\theta})-h(re^{i\theta})\right|}{r^{n}}d\theta ,\]
for $n\in \mathbb{N}\setminus \{1 \}$. Now using Definition \ref{d1}, for $n\in \mathbb{N}\setminus \{1 \}$
\begin{align*}
(n-1)| a_n|r^n&<\dfrac{1}{2\pi}\int_{0}^{2\pi}\left(\lambda- \left|re^{i\theta}g'(re^{i\theta})-g(re^{i\theta})\right| \right)d\theta \\
&< \lambda -\left|\dfrac{1}{2\pi}\int_{0}^{2\pi} \left( re^{i\theta}g'(re^{i\theta})-g(re^{i\theta}) \right) d\theta\right|=\lambda.
\end{align*}
Thus, for $n\in \mathbb{N}\setminus \{1 \}$
\[| a_n|< \dfrac{\lambda}{n-1},\]
as $r\rightarrow 1^-$. In similar, considering $zg'(z)-g(z)$ in place of $zh'(z)-h(z)$ and vice versa following from \eqref{2.1}, for $n\in \mathbb{N}\setminus \{1 \}$ we obtain 
\[| b_n|< \dfrac{\lambda}{n-1}.\]
Sharpness of $| a_n|$ holds for the function $f_{ a}(z)$ in $\Omega_{\mathcal{H}}^0(\lambda)$, where
\[f_{ a}(z)=z+\dfrac{\lambda}{n-1} z^n,\]
where $h(z)=z+\dfrac{\lambda}{n-1}z^n$ and $g(z)=0$, for $n\in \mathbb{N}\setminus\{1\}$. Further, sharpness of $| b_n|$ holds for the function $f_ b(z)$ in $\Omega_{\mathcal{H}}^0(\lambda)$, where
\[f_ b(z)=z-\dfrac{\lambda}{n-1}\overline{z}^n,\]
where $h(z)=z$ and $g(z)=\dfrac{-\lambda}{n-1}z^n$, for $n\in \mathbb{N}\setminus\{1\}$. This completes the proof. 
\end{proof}

\medskip
The following result provides a sufficient condition for a function to belong to class $\Omega_{\mathcal{H}}^0(\lambda)$.
\begin{theorem}\label{th3}
For any $f(z)=h(z)+\overline{g(z)}\in\mathcal{H}^0$, it also belongs to $\Omega_{\mathcal{H}}^0(\lambda)$ if
\begin{equation}\label{2.2}
\sum_{n=2}^{\infty}(n-1)(| a_n|+| b_n|)<\lambda.
\end{equation}
\end{theorem}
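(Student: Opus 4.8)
The plan is to verify the defining inequality of Definition \ref{d1} directly from the series hypothesis, with no appeal to the earlier lemmas. Since $f\in\mathcal{H}^0$, the functions $h$ and $g$ have the expansions \eqref{1.2}, so $h-zh'$ and $g-zg'$ are analytic on $\mathbb{D}$ and, exactly as in the computation \eqref{2.1}, one has
\[
h(z)-zh'(z)=-\sum_{n=2}^{\infty}(n-1)a_nz^n,\qquad g(z)-zg'(z)=-\sum_{n=2}^{\infty}(n-1)b_nz^n,
\]
both series converging for $z\in\mathbb{D}$.

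Next I would apply the triangle inequality term by term and use $|z|^n\le|z|^2<1$ for $n\ge 2$ and $z\in\mathbb{D}$ to get
\[
|h(z)-zh'(z)|+|g(z)-zg'(z)|\le\sum_{n=2}^{\infty}(n-1)(|a_n|+|b_n|)\,|z|^n\le\sum_{n=2}^{\infty}(n-1)(|a_n|+|b_n|).
\]
By the hypothesis \eqref{2.2} the right-hand side is strictly less than $\lambda$, hence $|h(z)-zh'(z)|<\lambda-|g(z)-zg'(z)|$ for every $z\in\mathbb{D}$, which is precisely the condition in Definition \ref{d1}. Since $f\in\mathcal{H}^0$ by assumption, this gives $f\in\Omega_{\mathcal{H}}^0(\lambda)$, completing the argument.

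Frankly, there is no real obstacle here: the proof is a routine majorant estimate. The only points worth care are (i) the strictness of the conclusion, which is inherited from the strict inequality in \eqref{2.2} (and at $z=0$ the left-hand side is $0<\lambda$ since $\lambda>0$), and (ii) noting that $h-zh'$ and $g-zg'$ are genuinely analytic on $\mathbb{D}$, so the term-by-term bound is legitimate. One might also add a remark that \eqref{2.2} is merely sufficient and not necessary, in line with the sharp coefficient bounds $\lambda/(n-1)$ obtained in Theorem \ref{th2}.
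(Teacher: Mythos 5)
Your proposal is correct and follows essentially the same route as the paper: expand $h(z)-zh'(z)$ and $g(z)-zg'(z)$ as in \eqref{2.1}, apply the triangle inequality termwise, majorize $|z|^n$ by $1$, and invoke the strict hypothesis \eqref{2.2} to obtain the inequality of Definition \ref{d1}. Your closing remark that \eqref{2.2} is only sufficient is accurate (and in fact more careful than the paper's concluding ``if and only if'' phrasing, which overstates the result).
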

\begin{proof}
Let $f(z)\in\mathcal{H}^0$. Now
\begin{align*}
|zh'(z)-h(z)|\;&\le \sum_{n=2}^{\infty}(n-1)| a_n||z|^n \;\le \sum_{n=2}^{\infty}(n-1)| a_n| \\
&< \lambda - \sum_{n=2}^{\infty}(n-1)| b_n|\; \le \lambda - \sum_{n=2}^{\infty}(n-1)| b_n||z|^n \\
&= \lambda - |zg'(z)-g(z)|,
\end{align*}
which is true if \eqref{2.2} holds true. Thus, $f(z)\in\Omega_{\mathcal{H}}^0(\lambda)$, if and only if \eqref{2.2} holds true. This completes the proof.
\end{proof}

\medskip
\subsection{Growth estimates, convex combination, and characteristic properties}

\begin{theorem}\label{th5}
For any $f(z)=h(z)+\overline{g(z)}\in\Omega_{\mathcal{H}}^0(\lambda)$, we have
\begin{align}
|z|-\lambda|z|^2 &\le |f(z)| \le |z|+\lambda|z|^2, \label{2.5} \\
1-2\lambda|z| &\le |f'(z)| \le 1+2\lambda|z|. \label{2.6}
\end{align}
Here equality occurs for the function $f_3(z)=z+\lambda \eta z^2\in\Omega_{\mathcal{H}}^0(\lambda)$, where $z\in \mathbb{D}$, $z\ne 0$ and $|\eta|=1$.
\end{theorem}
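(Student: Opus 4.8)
\emph{Proof proposal.} The plan is to deduce both estimates from the analytic growth and distortion estimates of Lemma~\ref{l3}, transported to the harmonic setting through the association in Theorem~\ref{th1}. So let $f(z)=h(z)+\overline{g(z)}\in\Omega_{\mathcal{H}}^0(\lambda)$. By Theorem~\ref{th1} the analytic function $F_{\zeta}(z)=h(z)+\zeta g(z)$ belongs to $\Omega(\lambda)$ for \emph{every} $\zeta$ with $|\zeta|=1$ (note $F_{\zeta}\in\mathcal{A}$ since $g'(0)=0$), and hence Lemma~\ref{l3} gives, uniformly in such $\zeta$,
\[
|z|-\lambda|z|^2\le|h(z)+\zeta g(z)|\le|z|+\lambda|z|^2,\qquad 1-2\lambda|z|\le|h'(z)+\zeta g'(z)|\le 1+2\lambda|z|.
\]

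Next I would use the elementary fact that, for a fixed $z\in\mathbb{D}$, $\max_{|\zeta|=1}|h(z)+\zeta g(z)|=|h(z)|+|g(z)|$ and $\min_{|\zeta|=1}|h(z)+\zeta g(z)|=\bigl||h(z)|-|g(z)|\bigr|$, the extrema being attained by choosing $\zeta$ so that $\zeta g(z)$ is co-directional, respectively anti-directional, with $h(z)$ (and trivially if $g(z)=0$). Inserting these into the uniform bounds above yields $|h(z)|+|g(z)|\le|z|+\lambda|z|^2$ and $\bigl||h(z)|-|g(z)|\bigr|\ge|z|-\lambda|z|^2$. Since the triangle and reverse–triangle inequalities give $\bigl||h(z)|-|g(z)|\bigr|\le|h(z)+\overline{g(z)}|=|f(z)|\le|h(z)|+|g(z)|$, the growth estimate \eqref{2.5} follows immediately. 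The distortion estimate \eqref{2.6} is obtained in exactly the same way, applying the $\zeta$-extremization to the pair $h'(z),\,g'(z)$ together with the second group of inequalities from Lemma~\ref{l3}.

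Finally, for sharpness I would check directly that $f_3(z)=z+\lambda\eta z^2$ (i.e.\ $h(z)=z+\lambda\eta z^2$, $g\equiv 0$, $|\eta|=1$) lies in $\Omega_{\mathcal{H}}^0(\lambda)$: indeed $|h(z)-zh'(z)|=\lambda|z|^2<\lambda=\lambda-|g(z)-zg'(z)|$ for $z\in\mathbb{D}$. For this function $|f_3(z)|=|z|\,|1+\lambda\eta z|$ and $|f_3'(z)|=|1+2\lambda\eta z|$, and choosing $\arg z$ so that $\eta z$ is real and positive (respectively negative) makes these equal the upper (respectively lower) bounds. I do not anticipate a genuine obstacle here; the only points deserving a little care are the $\min$/$\max$ computation over $|\zeta|=1$ and the observation that whenever $|z|-\lambda|z|^2$ or $1-2\lambda|z|$ is negative the corresponding lower bound holds automatically.
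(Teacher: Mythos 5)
Your proposal is correct and follows essentially the same route as the paper: transport $f$ to $F_{\zeta}=h+\zeta g\in\Omega(\lambda)$ via Theorem \ref{th1}, apply Lemma \ref{l3} uniformly in $\zeta$, extremize over $|\zeta|=1$ to get bounds on $|h|\pm|g|$ (and $|h'|\pm|g'|$), and conclude via the triangle and reverse-triangle inequalities, with $f_3(z)=z+\lambda\eta z^2$ for sharpness. In fact you spell out the $\min/\max$ over $\zeta$ and the membership of $f_3$ in $\Omega_{\mathcal{H}}^0(\lambda)$ more explicitly than the paper, which only says ``for appropriate choices of $\zeta$.''
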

\begin{proof}
Let $f(z)=h(z)+\overline{g(z)}\in\Omega_{\mathcal{H}}^0(\lambda)$ and hence using Theorem \ref{th1}, we have $F_{\zeta}(z)=h(z)+\zeta g(z)\in \Omega(\lambda)$, for all $|\zeta|=1$. Using Lemma \ref{l3}, we obtain
\begin{align*}
|z|-\lambda|z|^2 &\le |h(z)+\zeta g(z)| \le |z|+\lambda|z|^2,  \\
1-2\lambda|z| &\le |h'(z)+\zeta g'(z)| \le 1+2\lambda|z|. 
\end{align*}
Now, for appropriate choices of $\zeta$, we can write
\begin{align}
|z|-\lambda|z|^2 &\le |h(z)|-| g(z)| \quad \text{and}\quad |h(z)|+|g(z)|\le |z|+\lambda|z|^2, \label{2.7} \\
1-2\lambda|z| &\le |h'(z)|-| g'(z)| \quad \text{and}\quad |h'(z)|+|g'(z)|\le 1+2\lambda|z|, \label{2.8}
\end{align}
which further implies
\begin{align*}
|z|-\lambda|z|^2 &\le |h(z)+ \overline{g(z)}| \quad \text{and}\quad |h(z)+\overline{g(z)}|\le |z|+\lambda|z|^2,  \\
1-2\lambda|z| &\le |h'(z)+ \overline{g'(z)}| \quad \text{and}\quad |h'(z)+\overline{g'(z)}|\le 1+2\lambda|z|, 
\end{align*}
which is equivalent to \eqref{2.5} and \eqref{2.6}, respectively. As the function $f_3(z)=z+\lambda \eta z^2\in\Omega_{\mathcal{H}}^0(\lambda)$, for $z\in \mathbb{D}$, $z\ne 0$ and $|\eta|=1$, equality occurs for appropriate choices of $\eta$. This completes the proof.
\end{proof}

\medskip
\begin{theorem}\label{th6}
The class $\Omega_{\mathcal{H}}^0(\lambda)$ is closed under convex combination.
\end{theorem}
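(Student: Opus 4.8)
The plan is to take finitely many functions $f_1(z), f_2(z), \dots, f_m(z)$ in $\Omega_{\mathcal{H}}^0(\lambda)$, form the convex combination $f(z) = \sum_{k=1}^{m} t_k f_k(z)$ with $t_k \ge 0$ and $\sum_{k=1}^{m} t_k = 1$, and verify directly from Definition \ref{d1} that $f(z) \in \Omega_{\mathcal{H}}^0(\lambda)$. Writing $f_k(z) = h_k(z) + \overline{g_k(z)}$, the analytic and co-analytic parts of $f$ are $h(z) = \sum_{k=1}^{m} t_k h_k(z)$ and $g(z) = \sum_{k=1}^{m} t_k g_k(z)$. First I would check the normalization: since each $h_k$ has the form \eqref{1.2} and each $g_k$ has no constant or linear term, the same holds for $h$ and $g$, so $f \in \mathcal{H}^0$.

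The core estimate is then a triangle-inequality computation. For $z \in \mathbb{D}$,
\begin{align*}
|h(z) - zh'(z)| &= \left| \sum_{k=1}^{m} t_k \bigl( h_k(z) - z h_k'(z) \bigr) \right| \le \sum_{k=1}^{m} t_k \, |h_k(z) - z h_k'(z)| \\
&< \sum_{k=1}^{m} t_k \bigl( \lambda - |g_k(z) - z g_k'(z)| \bigr) = \lambda - \sum_{k=1}^{m} t_k \, |g_k(z) - z g_k'(z)| \\
&\le \lambda - \left| \sum_{k=1}^{m} t_k \bigl( g_k(z) - z g_k'(z) \bigr) \right| = \lambda - |g(z) - z g'(z)|,
\end{align*}
where the strict inequality uses that each $f_k \in \Omega_{\mathcal{H}}^0(\lambda)$ and the last inequality is again the triangle inequality (together with $\sum t_k = 1$). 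This shows $|h(z) - zh'(z)| < \lambda - |g(z) - z g'(z)|$, which is exactly the defining condition, so $f \in \Omega_{\mathcal{H}}^0(\lambda)$.

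There is essentially no hard obstacle here; the only points requiring a little care are that the operation $F \mapsto F - zF'$ is linear (so it commutes with the convex combination), that the convexity weights being nonnegative and summing to one is what lets the $\lambda$-terms recombine to a single $\lambda$, and that the strict inequality is preserved because $t_k > 0$ for at least one $k$ (if all weights were zero the sum would be empty, so this is automatic under $\sum t_k = 1$). I would state the argument for a finite convex combination; the extension to a countable convex combination $\sum_{k=1}^{\infty} t_k f_k$ follows the same lines provided one first notes the series for $h$ and $g$ converge locally uniformly in $\mathbb{D}$, which is immediate from the coefficient bounds in Theorem \ref{th2}.
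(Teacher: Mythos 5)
Your argument is correct and is essentially the paper's own proof: decompose the convex combination into its analytic and co-analytic parts, apply the triangle inequality to $h(z)-zh'(z)$, use the defining inequality for each $f_k$, and recombine the $g_k$-terms by a second triangle inequality. In fact your write-up is slightly more careful than the paper's (you make explicit the weights inside the final modulus and the normalization check for $\mathcal{H}^0$), but the route is the same.
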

\begin{proof}
Let $f_k(z)=h_k(z)+\overline{g_k(z)}\in \Omega_{\mathcal{H}}^0(\lambda)$ for $k=1,\,2,\,3,\,\dots,\,n,$ and $\sum_{k=1}^n t_k=1$, where $t_k\in[0,1]$. Note that
\[f(z)=\sum_{k=1}^n t_k f_k(z)=h(z)+g(z),\]
is a convex combination of $f_k$'s, where $h(z)=\sum_{k=1}^n t_k h_k(z)$ and $g(z)=\sum_{k=1}^n t_k g_k(z)$. Thus, both $h(z)$ and $g(z)$ are in class $\mathcal{A}$. Now
\begin{align*}
\left|h(z)-zh'(z)\right|&=\left|\sum_{k=1}^n t_k h_k(z)-z\sum_{k=1}^n t_k h_k'(z)\right|\le \sum_{k=1}^n t_k \left| h_k(z)-z h_k'(z)\right|\\
&<\lambda- \left|\sum_{k=1}^n t_k g_k(z)-z g_k'(z)\right|=\lambda-\left|g(z)-zg'(z)\right|.
\end{align*}
Thus, $f(z)\in \Omega_{\mathcal{H}}^0(\lambda)$.
\end{proof}

\medskip
\begin{theorem}\label{th7}
For any $f(z)=h(z)+\overline{g(z)}\in\Omega_{\mathcal{H}}^0(\lambda)$, we have
\[J_f(z)\le (1+2\lambda |z|)^2.\]
Here equality occurs for the function $f_3(z)=z+\lambda \eta z^2$, where $z\in \mathbb{D}$, $z\ne 0$, and $|\eta|=1$. 
\end{theorem}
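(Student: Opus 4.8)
The plan is to exploit the factorization of the Jacobian together with the growth bound on $|h'(z)|+|g'(z)|$ already obtained in Theorem~\ref{th5}. Write
\[
J_f(z)=|h'(z)|^2-|g'(z)|^2=\bigl(|h'(z)|-|g'(z)|\bigr)\bigl(|h'(z)|+|g'(z)|\bigr).
\]
First I would invoke that $f(z)=h(z)+\overline{g(z)}\in\Omega_{\mathcal{H}}^0(\lambda)\subset\mathcal H^0$ is sense-preserving (this is built into membership in $\mathcal H$), so $J_f(z)>0$ in $\mathbb D$, which forces $|h'(z)|>|g'(z)|$ and hence $0\le|h'(z)|-|g'(z)|\le|h'(z)|+|g'(z)|$.

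Next I would recall the second inequality in \eqref{2.8}, namely $|h'(z)|+|g'(z)|\le 1+2\lambda|z|$, proved inside Theorem~\ref{th5} via Theorem~\ref{th1} and Lemma~\ref{l3}. Combining the two observations, both factors of $J_f(z)$ are nonnegative and each is bounded above by $1+2\lambda|z|$; multiplying gives
\[
J_f(z)=\bigl(|h'(z)|-|g'(z)|\bigr)\bigl(|h'(z)|+|g'(z)|\bigr)\le (1+2\lambda|z|)(1+2\lambda|z|)=(1+2\lambda|z|)^2,
\]
which is the asserted bound.

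For sharpness, I would take $f_3(z)=z+\lambda\eta z^2$ with $|\eta|=1$, so that $h(z)=z+\lambda\eta z^2$ and $g(z)\equiv 0$; one checks $|h(z)-zh'(z)|=\lambda|z|^2<\lambda$, so $f_3\in\Omega_{\mathcal{H}}^0(\lambda)$. Here $J_{f_3}(z)=|h'(z)|^2=|1+2\lambda\eta z|^2$, and choosing $\eta=|z|/z$ for $z\ne 0$ makes $|1+2\lambda\eta z|=1+2\lambda|z|$, giving equality. The proof involves no real obstacle; the only point that requires a word of care is the explicit use of the sense-preserving hypothesis to guarantee $|h'(z)|-|g'(z)|\ge 0$, since without it the factorization argument would not yield the clean squared bound.
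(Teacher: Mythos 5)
Your argument is correct, but it takes a slightly different route from the paper. The paper does not factor the Jacobian: it first observes that $|h(z)-zh'(z)|<\lambda-|g(z)-zg'(z)|\le\lambda$ forces $h\in\Omega(\lambda)$, applies Lemma~\ref{l3} to get $|h'(z)|\le 1+2\lambda|z|$, and then simply discards the nonpositive term, $J_f(z)=|h'(z)|^2-|g'(z)|^2\le|h'(z)|^2\le(1+2\lambda|z|)^2$; the sharpness example is the same as yours. Your version instead writes $J_f=\bigl(|h'|-|g'|\bigr)\bigl(|h'|+|g'|\bigr)$ and bounds both factors by $1+2\lambda|z|$ via the second inequality in \eqref{2.8}, which itself comes from Theorem~\ref{th1} plus Lemma~\ref{l3}; so the underlying growth estimate is the same, but you route it through the sum $|h'|+|g'|$ rather than through $|h'|$ alone. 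One caveat: your appeal to sense-preservation is both shaky and unnecessary. The paper's definition of $\mathcal{H}$ only imposes harmonicity and normalization, so $J_f>0$ is not literally ``built into membership in $\mathcal{H}$''; but you do not need it, since if $|h'(z)|<|g'(z)|$ at some point then $J_f(z)<0\le(1+2\lambda|z|)^2$ and the bound is trivial there, and the paper's step $J_f\le|h'|^2$ bypasses the issue entirely. What your factorization buys is essentially nothing extra for this statement, while the paper's one-line estimate is shorter and hypothesis-free; your verification that $f_3\in\Omega_{\mathcal{H}}^0(\lambda)$ and the choice $\eta=\overline{z}/|z|$ for equality are fine and match the paper's intent.
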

\begin{proof}
Let $f(z)\in \Omega_{\mathcal{H}}^0(\lambda)$. Now
\[\left|h(z)-zh'(z)\right|< \lambda-\left|g(z)-zg'(z)\right| \le \lambda.\]
Thus, $h(z)\in\Omega(\lambda)$. Hence using Lemma \ref{l3}, we obtain
\[|h'(z)|\le 1+ 2\lambda|z|.\]
Now
\[J_f(z)=|h'(z)|^2-|g'(z)|^2\le\;|h'(z)|^2\le \;(1+ 2\lambda|z|)^2.\]

Consider the function of $\Omega_{\mathcal{H}}^0(\lambda)$, $f_3(z)=h_3(z)+\overline{g_3(z)}=z+\lambda \eta z^2$, for $z\in \mathbb{D}$, $z\ne 0$, and $|\eta|=1$. Clearly, $h_3(z)=z+\lambda \eta z^2$. This implies 
\[|h'_3(z)|=| 1+ 2\lambda \eta z|=1+ 2\lambda|z|,\]
for an appropriate choice of $\eta$. This completes the proof.
\end{proof}

\medskip
\begin{theorem}\label{th8}
Each $f(z)=h(z)+\overline{g(z)}\in\Omega_{\mathcal{H}}^0(\lambda)$ maps $\mathbb{D}$ onto a domain bounded by a rectifiable Jordan curve, when $\lambda\le 1/2$.
\end{theorem}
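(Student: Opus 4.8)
The statement is: every $f=h+\overline{g}\in\Omega_{\mathcal H}^0(\lambda)$ with $\lambda\le 1/2$ maps $\mathbb D$ onto a domain bounded by a rectifiable Jordan curve. The natural route is to pass to the associated analytic function and use a classical criterion. By Theorem~\ref{th1}, for each $|\zeta|=1$ the function $F_\zeta=h+\zeta g$ lies in $\Omega(\lambda)$, and by Corollary~\ref{c3}, since $\lambda\le 1/2$, each $F_\zeta$ is univalent (indeed starlike) on all of $\mathbb D$; equivalently $f$ is stable harmonic univalent on $\mathbb D$. Thus $f$ itself is univalent on $\mathbb D$, so $f(\mathbb D)$ is a Jordan domain provided $f$ extends continuously and injectively to $\overline{\mathbb D}$, and the boundary curve is rectifiable provided that extension has bounded variation on $|z|=1$.

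\medskip

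\textbf{Key steps.} First I would record that $f$ is univalent on $\mathbb D$ (Corollary~\ref{c3}). Second, I would establish that $h$ and $g$ extend continuously to $\overline{\mathbb D}$: from Definition~\ref{d1} one has $|zh'(z)-h(z)|<\lambda$ and $|zg'(z)-g(z)|<\lambda$ on $\mathbb D$, i.e. $(h(z)/z)' = (zh'-h)/z^2$ satisfies $|(h(z)/z)'|<\lambda/|z|^2$; more usefully, the function $\phi(z)=zh'(z)-h(z)=\sum_{n\ge2}(n-1)a_nz^n$ is bounded analytic, hence in $H^\infty\subset H^1$, and $h'(z)=(h(z)+\phi(z))/z$. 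Combined with the growth bound $|h'(z)|\le 1+2\lambda|z|\le 1+2\lambda$ from Theorem~\ref{th7} (its proof shows $h\in\Omega(\lambda)$), we get that $h'$ is bounded on $\mathbb D$, so $h\in H^\infty$ with bounded derivative; therefore $h$ is Lipschitz on $\mathbb D$ and extends continuously (indeed Lipschitz) to $\overline{\mathbb D}$. The same argument applies to $g$. Consequently $f=h+\overline g$ extends continuously to $\overline{\mathbb D}$.

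\medskip

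Third, rectifiability: since $h'$ and $g'$ are bounded on $\mathbb D$, the boundary arc-length $\int_0^{2\pi}|\,df(e^{i\theta})|$ is controlled by $\int_0^{2\pi}(|h'(e^{i\theta})|+|g'(e^{i\theta})|)\,d\theta\le 2\pi(1+2\lambda)+2\pi\cdot 2\lambda<\infty$ (the $g'$ bound following from $|g'(z)|^2\le J_f(z)\cdot(\text{nothing})$— more directly, $g\in\Omega(\lambda)$ by the same reasoning since $|zg'-g|<\lambda$, so $|g'(z)|\le 1+2\lambda|z|$ by Lemma~\ref{l3}); hence $f$ restricted to $|z|=1$ has finite total variation, so the boundary curve is rectifiable. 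Fourth, injectivity on the boundary: a univalent harmonic map of $\mathbb D$ whose boundary values form a rectifiable curve—in fact, since each $F_\zeta$ is \emph{starlike} on $\mathbb D$, each $F_\zeta$ extends to a homeomorphism of $\overline{\mathbb D}$ onto the closed starlike Jordan domain $\overline{F_\zeta(\mathbb D)}$ (a starlike function with bounded derivative extends to a homeomorphism of the closure); taking $\zeta=1$ gives that $F_1=h+g$ maps $\partial\mathbb D$ homeomorphically onto a Jordan curve, and one argues that the harmonic extension $f$ then also has a homeomorphic boundary extension. The cleanest packaging is: a sense-preserving harmonic univalent map of $\mathbb D$ with continuous boundary values that trace out a Jordan curve maps $\mathbb D$ onto the Jordan domain it bounds (a harmonic analogue of Carathéodory's theorem), so it remains only to verify the boundary trace is a Jordan curve, which follows from the boundary behaviour of the starlike functions $F_\zeta$.

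\medskip

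\textbf{Main obstacle.} The hard part is the passage from ``$f$ is univalent on the open disk with nice boundary values'' to ``$f$ maps onto a \emph{Jordan} domain,'' i.e. the injectivity of the boundary extension and the absence of boundary self-intersections. Interior univalence of $f$ does not by itself force boundary injectivity, so one genuinely needs the structure at hand—the starlikeness of the $F_\zeta$'s (Corollary~\ref{c3}) and the bounded-derivative Lipschitz estimate—to pin down the boundary behaviour. I expect the proof in the paper to invoke a known theorem (e.g. that a starlike domain with rectifiable boundary is a Jordan domain, or a harmonic Carathéodory-type extension result) to bypass a delicate direct argument; if not, the delicate point is showing $f(e^{i\theta_1})=f(e^{i\theta_2})$ forces $\theta_1=\theta_2$, which would be handled by transferring the question to $F_1=h+g$ on the boundary and using that starlike boundary curves are simple.
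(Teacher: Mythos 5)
Your first two steps coincide with the paper's: the derivative bounds coming from Theorem \ref{th5} (the paper uses its inequality \eqref{2.8}, $|h'(z)|+|g'(z)|\le 1+2\lambda|z|$) give a Lipschitz estimate $|f(z_1)-f(z_2)|\le(1+2\lambda)|z_1-z_2|$, hence a continuous extension to $\overline{\mathbb{D}}$ and rectifiability of the boundary trace by summing over partitions. (A small slip: $g$ is not in $\mathcal{A}$ since $g'(0)=0$, so Lemma \ref{l3} does not literally apply to $g$; the bound on $|g'|$ should be extracted from \eqref{2.8} or from $F_\zeta\in\Omega(\lambda)$ with suitable $\zeta$, as the paper does.)

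The genuine gap is exactly the step you flag as the main obstacle: injectivity of the boundary values, i.e.\ that the trace is a \emph{Jordan} curve. Your proposed resolution does not work. Starlikeness of the $F_\zeta$ is not enough: a starlike domain can fail to be a Jordan domain (a radially slit disk is starlike with rectifiable, locally connected, non-simple boundary), so ``starlike boundary curves are simple'' is false, and the parenthetical claim that a starlike function with bounded derivative extends to a homeomorphism of $\overline{\mathbb{D}}$ is asserted without proof and is precisely what needs an argument. Moreover, transferring boundary injectivity of $f$ to the single function $F_1=h+g$ is insufficient; one needs the whole family $\{F_\zeta:|\zeta|=1\}$. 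The paper closes this gap differently: for $\lambda\le 1/2$ one has $\Omega(\lambda)\subseteq\Omega$, and by the Mahzoom--Kargar estimate $\Re F_\zeta'(z)\ge 1-|z|>0$, so each $F_\zeta$ lies in MacGregor's class $\Re F'>0$; MacGregor's theorem (cited as \cite{mth}) yields univalence of $F_\zeta$ up to the boundary. Then, assuming $f(z_1)=f(z_2)$, the identity $h(z_1)-h(z_2)=\overline{g(z_1)-g(z_2)}$ is analyzed in two cases: if $h(z_1)=h(z_2)$ then also $g(z_1)=g(z_2)$ and univalence of $F_\zeta$ forces $z_1=z_2$; otherwise, writing $\theta=\arg\{h(z_1)-h(z_2)\}$ one gets $F_\zeta(z_1)=F_\zeta(z_2)$ for a specific unimodular $\zeta$ depending on $\theta$, and again $z_1=z_2$. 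Without this positive-real-part/MacGregor input (or some substitute), your argument establishes continuity, rectifiability and interior univalence but not that $f(\mathbb{D})$ is bounded by a Jordan curve, so the proof is incomplete.
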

\begin{proof}
Let $f(z)\in \Omega_{\mathcal{H}}^0(\lambda)$ and let $z_1$, $z_2\in \mathbb{D}$ such that $z_1\ne z_2$ and $|z_1|\ge|z_2|$. For the line segment $[z_1,z_2]$,
\begin{align}
\left|f(z_1)-f(z_2) \right|&=\left|\int_{[z_1,z_2]} \dfrac{\partial f}{\partial \tau}d\tau + \dfrac{\partial f}{\partial \overline{\tau}}d\overline{\tau}\right|\label{2.9}\\
&\le \int_{[z_1,z_2]} \left(\;|h'(\tau)|+|g'(\tau)|\; \right) |d\tau|.\notag
\end{align}
Using \eqref{2.8} (from Theorem \ref{th5}) in \eqref{2.9}, we obtain
\begin{align}
\left|f(z_1)-f(z_2) \right|&\le\int_{|z_2|}^{|z_1|} (1+2\lambda \kappa)d\kappa \label{2.10} \\
&= (|z_1|-|z_2|)+\lambda(|z_1|^2-|z_2|^2)\notag\\
&= (|z_1|-|z_2|)\left(1+\lambda(|z_1|+|z_2|)\right)\notag\\
&\le (|z_1|-|z_2|)\left(1+2\lambda\right)\le (1+2\lambda)|z_1-z_2|< \epsilon,\notag
\end{align}
for any $\epsilon>0$, such that $|z_1-z_2|<\delta:=\dfrac{\epsilon}{1+2\lambda}$, for any $\delta>0$. Therefore, $f(z)$ is uniformly continuous on $\mathbb{D}$ and consequently can be extended continuously on the boundary of $\mathbb{D}$.

Let $C$ be a curve defined by $f(e^{i\theta})$ for $\theta\in[0,2\pi]$, and let $0=\theta_0<\theta_1<\dots<\theta_n=2\pi$ be a partition of $[0,2\pi]$. Now using \eqref{2.10}, we obtain 
\begin{align*}
\sum_{k=1}^n|f(e^{i\theta_k})-f(e^{i\theta_{k-1}})|
&\le (1+2\lambda)\sum_{k=1}^n|e^{i\theta_k}-e^{i\theta_{k-1}}|\\
&\le (1+2\lambda) \left(|e^{i\theta_0}|+2(|e^{i\theta_1}|+\dots+|e^{i\theta_{n-1}}|)+|e^{i\theta_{n}}|\right)\\
&\le (1+2\lambda)2n.
\end{align*}
Thus, $C$ is rectifiable for a finite $\lambda$.

Following the proof of \cite[Lemma 3.2]{hesam}, we have $\Re(F'_{\zeta}(z))\ge 1-|z|>0$, $z\in \mathbb{D}$ for $F_{\zeta}(z)\in\Omega$. Thus, for $F_{\zeta}(z)\in\Omega(\lambda)$, $\Re(F'_{\zeta}(z))>0$ for all $z\in \mathbb{D}$ if $\lambda\le 1/2$. Therefore by the association (Theorem \ref{th1}) and from the proof of \cite[Theorem 3]{mth}\label{l8}, it is concluded that $F_{\zeta}(z)$ is univalent on the boundary of $\mathbb{D}$. Now, let's consider the points, $z_1\ne z_2$ and let $f(z_1)=f(z_2)$. The later implies
\[h(z_1)-h(z_2)=\overline{g(z_1)-g(z_2)}.\]

\textit{Case I} : Let $h(z_1)=h(z_2)$. Then $g(z_1)=g(z_2)$, and since $F_{\zeta}(z)$ is univalent, $z_1=z_2$.

\textit{Case II} : Let $h(z_1)\ne h(z_2)$, and let $\theta=\arg\{h(z_1)-h(z_2)\}\in [0,2\pi)$. Then,
\[e^{-i\theta}\left(h(z_1)-h(z_2)\right)=\overline{e^{i\theta}\left(g(z_1)-g(z_2)\right)}\in \mathbb{R}^+.\]
Taking conjugate both sides, we obtain
\[\left(h(z_1)-h(z_2)\right)=e^{2i\theta}\left(g(z_1)-g(z_2)\right),\]
that implies $F_{\zeta}(z_1)=F_{\zeta}(z_2)$, where $\zeta=e^{2i\theta}$. Hence, $z_1=z_2$. Thus, $f(z)$ is univalent on the boundary of $\mathbb{D}$. These all complete the proof.
\end{proof}

\begin{remark}
From \eqref{2.10} of Theorem \ref{th8} and for $\lambda\le 1/2$, it can be seen that functions in $\Omega_{\mathcal{H}}^0(\lambda)$ map $\mathbb{D}$ onto a domain which is contained in a disk of radius $2$ with center at the origin. Further, the area $f(\mathbb{D})$ is bounded by $4n$, where $n\in \mathbb{N}$ is finite.
\end{remark}

\medskip
\subsection{Producing conditions of belongingness to $\Omega_{\mathcal{H}}^0(\lambda)$}
Here, we produce conditions for functions to belong the class $\Omega_{\mathcal{H}}^0(\lambda)$ whose co-analytic parts include Gaussian hypergeometric functions, defined by
\begin{equation}\label{2.11}
_2F_1(a,b;c;z)=F(a,b;c;z)=\sum_{n=0}^{\infty}\dfrac{(a)_n(b)_n}{n!\,(c)_n }z^n,\quad z\in \mathbb D,
\end{equation}
for $a,\,b,\,c\in \mathbb C$, and $c\notin \mathbb Z^{-}\cup \{0\}$. Here, $(x)_n=x(x+1)(x+2)\dots(x+n-1),$ $n\in \mathbb{N}$ is the well-known Pochhammer symbol, with $(x)_0=1$. The series \eqref{2.11} converges absolutely in $\mathbb{D}$, and converges on $\overline{\mathbb{D}}$ if $\Re(c-a-b)>0$. The well-known Gauss formula \cite{nmt} is defined by
\begin{equation}\label{2.12}
F(a,b;c;1)=\sum_{n=0}^{\infty}\dfrac{(a)_n(b)_n}{n!\,(c)_n}=\dfrac{\Gamma(c)\Gamma(c-a-b)}{\Gamma(c-a)\Gamma(c-b)},
\end{equation}
for $\Re(c-a-b)>0$ and the following lemma are useful for our next result.
\begin{lemma}\cite{prf}\label{l9}
For any $a,\,b,\,c\in \mathbb{R}^{+}$, if $c-a-b-1>0$, then
\[\sum_{n=0}^{\infty}\dfrac{(n+1)(a)_n(b_n)}{n!\,(c)_n }=\left(\dfrac{ab}{c-a-b-1}+1\right)F(a,b;c;1).\]
\end{lemma}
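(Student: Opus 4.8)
\textbf{Proof proposal for Lemma~\ref{l9}.}

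The plan is to reduce the stated identity to the Gauss summation formula \eqref{2.12} by absorbing the extra linear factor $(n+1)$ into a shift of the hypergeometric parameters. First I would write $n+1 = (n+c-a-b) + (a+b-c+1)$ is not quite the cleanest split; instead I would use the more symmetric decomposition $n+1 = \frac{1}{c-a-b-1}\bigl[(c-a-b-1)(n+1)\bigr]$ only as a bookkeeping device, and focus on the key algebraic fact that for the general term $t_n = \frac{(a)_n(b)_n}{n!\,(c)_n}$ one has the contiguous-type relation
\[
n\,t_n = \frac{ab}{c}\,\frac{(a+1)_{n-1}(b+1)_{n-1}}{(n-1)!\,(c+1)_{n-1}} \cdot \text{(appropriate factor)},
\]
i.e. $\sum_{n\ge 0} n\, t_n$ is, after reindexing $n \mapsto n+1$, a multiple of $F(a+1,b+1;c+1;1)$. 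So the first real step is to compute $\sum_{n=0}^\infty n\,\frac{(a)_n(b)_n}{n!\,(c)_n}$ in closed form: shifting the index gives $\frac{ab}{c}\sum_{m=0}^\infty \frac{(a+1)_m(b+1)_m}{m!\,(c+1)_m} = \frac{ab}{c}F(a+1,b+1;c+1;1)$, and then Gauss's formula \eqref{2.12} evaluates this as $\frac{ab}{c}\cdot\frac{\Gamma(c+1)\Gamma(c-a-b-1)}{\Gamma(c-a)\Gamma(c-b)}$, valid precisely because $\Re(c-a-b-1)>0$. Using $\Gamma(c+1)=c\,\Gamma(c)$ this simplifies to $ab\cdot\frac{\Gamma(c)\Gamma(c-a-b-1)}{\Gamma(c-a)\Gamma(c-b)}$.

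Next I would relate $\Gamma(c-a-b-1)$ back to $\Gamma(c-a-b)$ via $\Gamma(c-a-b) = (c-a-b-1)\Gamma(c-a-b-1)$, so that $\sum_{n\ge 0} n\,t_n = \frac{ab}{c-a-b-1}\cdot\frac{\Gamma(c)\Gamma(c-a-b)}{\Gamma(c-a)\Gamma(c-b)} = \frac{ab}{c-a-b-1}\,F(a,b;c;1)$, again by \eqref{2.12} (here only $\Re(c-a-b)>0$ is needed, which follows from the hypothesis). Then the desired sum splits as
\[
\sum_{n=0}^\infty (n+1)\,t_n = \sum_{n=0}^\infty n\,t_n + \sum_{n=0}^\infty t_n = \frac{ab}{c-a-b-1}\,F(a,b;c;1) + F(a,b;c;1) = \left(\frac{ab}{c-a-b-1}+1\right)F(a,b;c;1),
\]
which is exactly the claimed identity. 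Throughout, the condition $a,b,c\in\mathbb{R}^+$ together with $c-a-b-1>0$ guarantees that all series converge absolutely (so the splitting and reindexing are legitimate) and that no Gamma function is evaluated at a pole.

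The only point requiring genuine care — the ``main obstacle'' — is justifying the interchange of summation order / term rearrangement and, relatedly, the absolute convergence of $\sum n\,t_n$ on $\overline{\mathbb{D}}$: this needs $\Re(c-a-b-1)>0$ rather than merely $\Re(c-a-b)>0$, since the factor $n$ effectively worsens the decay rate of the general term by one power. I would handle this by invoking the standard asymptotics $t_n \sim \frac{\Gamma(c)}{\Gamma(a)\Gamma(b)}\,n^{a+b-c-1}$ as $n\to\infty$ (from Stirling's formula applied to $(a)_n,(b)_n,(c)_n,n!$), so that $n\,t_n = O(n^{a+b-c})$ and the series converges absolutely exactly when $c-a-b-1>0$. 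With absolute convergence in hand, every manipulation above — the index shift, the split of $(n+1)$ into $n+1$, and the two applications of \eqref{2.12} — is fully rigorous, and the proof is complete.
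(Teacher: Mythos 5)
Your proof is correct: the splitting $(n+1)t_n = n\,t_n + t_n$, the reindexing to $\frac{ab}{c}F(a+1,b+1;c+1;1)$, and the two applications of Gauss's formula \eqref{2.12} (with $\Gamma(c+1)=c\,\Gamma(c)$ and $\Gamma(c-a-b)=(c-a-b-1)\Gamma(c-a-b-1)$) yield exactly the stated identity, and your Stirling-based convergence remark correctly identifies why $c-a-b-1>0$ is the right hypothesis. Note that the paper itself gives no proof of this lemma—it is quoted from the cited reference of Ponnusamy and R{\o}nning—so there is nothing to diverge from; your argument is the standard one and in fact coincides with the computation the authors carry out explicitly for the analogous sum in part (b) of Theorem \ref{th9}.
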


\begin{theorem}\label{th9}
For any $a,\,b,\,c\in \mathbb{R}^{+}$ and $\eta \in \mathbb{D}$, satisfying $c-a-b-1>0$, the following results hold.
\begin{enumerate}[(a)]
\item $f_4(z)\in\Omega_{\mathcal{H}}^0(\lambda)$, if
\begin{equation}\label{2.13}
F(a,b;c;1)<\dfrac{\lambda}{|\eta|}.
\end{equation}
\item $f_5(z)\in\Omega_{\mathcal{H}}^0(\lambda)$, if
\begin{equation}\label{2.14}
\left(\dfrac{ab}{c-a-b-1}\right)F(a,b;c;1)<\dfrac{\lambda}{|\eta|}.
\end{equation}
\item $f_6(z)\in\Omega_{\mathcal{H}}^0(\lambda)$, if
\begin{equation}\label{2.15}
\left(\dfrac{ab}{c-a-b-1}+1 \right)F(a,b;c;1)<\dfrac{\lambda}{|\eta|}.
\end{equation}
\end{enumerate}
Here,
\[f_4(z)=z+\overline{\eta z \int_0^z F(a,b;c;\tau)d\tau},\;f_5(z)=z+\overline{\eta z\left(F(a,b;c;z)-1 \right)},\;f_6(z)=z+\overline{\eta z^2F(a,b;c;z)}.\]
\end{theorem}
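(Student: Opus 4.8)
The plan is to derive all three parts from the sufficient coefficient condition of Theorem~\ref{th3}. In every case the analytic part is just $h(z)=z$, so $a_n=0$ for $n\ge 2$, and the requirement $\sum_{n=2}^{\infty}(n-1)(|a_n|+|b_n|)<\lambda$ collapses to $\sum_{n=2}^{\infty}(n-1)|b_n|<\lambda$, where the $b_n$ are the Taylor coefficients of the co-analytic part $g$. Thus the whole matter reduces to: expand each $g$ as a power series in $\mathbb{D}$, read off $|b_n|$, sum $\sum_{n=2}^{\infty}(n-1)|b_n|$ in closed form, and compare with $\lambda$. First I would fix the notation $F(a,b;c;z)=\sum_{n=0}^{\infty}\gamma_n z^n$ with $\gamma_n=(a)_n(b)_n/(n!\,(c)_n)>0$, and note that $c-a-b-1>0$ forces $\Re(c-a-b)>0$, so $\sum_n\gamma_n=F(a,b;c;1)<\infty$ by \eqref{2.12} and Lemma~\ref{l9} is applicable.

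For part~(a) I would expand $g(z)=\eta z\int_0^z F(a,b;c;\tau)\,d\tau=\eta\sum_{n=0}^{\infty}\frac{\gamma_n}{n+1}z^{n+2}$, giving $b_m=\eta\gamma_{m-2}/(m-1)$ and hence $(m-1)|b_m|=|\eta|\gamma_{m-2}$ for $m\ge 2$; summing and shifting the index yields $\sum_{m\ge 2}(m-1)|b_m|=|\eta|\sum_{n\ge 0}\gamma_n=|\eta|\,F(a,b;c;1)$, which is $<\lambda$ precisely when \eqref{2.13} holds. For part~(c), $g(z)=\eta z^2F(a,b;c;z)=\eta\sum_{n=0}^{\infty}\gamma_n z^{n+2}$, so $(m-1)|b_m|=|\eta|(m-1)\gamma_{m-2}$ and $\sum_{m\ge 2}(m-1)|b_m|=|\eta|\sum_{n\ge 0}(n+1)\gamma_n$, which Lemma~\ref{l9} evaluates to $|\eta|\bigl(\frac{ab}{c-a-b-1}+1\bigr)F(a,b;c;1)$; this is $<\lambda$ exactly under \eqref{2.15}.

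Part~(b) is the step I expect to cost a little care. From $g(z)=\eta z\bigl(F(a,b;c;z)-1\bigr)=\eta\sum_{n=1}^{\infty}\gamma_n z^{n+1}$ one gets $b_m=\eta\gamma_{m-1}$, hence $(m-1)|b_m|=|\eta|(m-1)\gamma_{m-1}$ and, after shifting, $\sum_{m\ge 2}(m-1)|b_m|=|\eta|\sum_{n\ge 1}n\,\gamma_n$. The remaining task is to recognize $\sum_{n\ge 1}n\gamma_n$: splitting $\sum_{n\ge 0}(n+1)\gamma_n=\sum_{n\ge 1}n\gamma_n+\sum_{n\ge 0}\gamma_n$ and subtracting $F(a,b;c;1)$ from Lemma~\ref{l9} gives $\sum_{n\ge 1}n\gamma_n=\frac{ab}{c-a-b-1}F(a,b;c;1)$, so the coefficient sum equals $|\eta|\frac{ab}{c-a-b-1}F(a,b;c;1)$, which is $<\lambda$ exactly when \eqref{2.14} holds. (The same value also follows directly by writing $\sum_{n\ge 1}n\gamma_n=\frac{ab}{c}F(a+1,b+1;c+1;1)$ and applying \eqref{2.12} together with $\Gamma(c+1)=c\Gamma(c)$ and $\Gamma(c-a-b)=(c-a-b-1)\Gamma(c-a-b-1)$, which also makes transparent why $c-a-b-1>0$ is exactly the condition for $F(a+1,b+1;c+1;1)$ to be finite.)

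Finally, I would check the standing hypothesis $f_4,f_5,f_6\in\mathcal{H}^0$ needed to invoke Theorem~\ref{th3}: in each case $h(z)=z$ satisfies $h(0)=0$, $h'(0)=1$, and the co-analytic part $g$ is analytic on $\mathbb{D}$ and, thanks to the extra factor of $z$ (or $z^2$) in front of $\int_0^z F(a,b;c;\tau)\,d\tau$, of $F(a,b;c;z)-1$, and of $F(a,b;c;z)$ respectively, vanishes to order at least $2$ at the origin; hence $f=h+\overline{g}$ is harmonic, normalized by $f(0)=0$, $f_z(0)=1$, $f_{\overline{z}}(0)=0$, so it indeed lies in $\mathcal{H}^0$ and Theorem~\ref{th3} applies. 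The only genuine obstacle in this whole argument is the index/Pochhammer bookkeeping in part~(b) --- identifying $\sum_{n\ge 1}n\gamma_n$ with the stated multiple of $F(a,b;c;1)$ --- and Lemma~\ref{l9} is exactly the tool that makes it routine.
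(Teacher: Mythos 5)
Your proposal is correct and follows essentially the same route as the paper: reduce each case to the coefficient criterion of Theorem~\ref{th3} with $h(z)=z$, compute $\sum_{n\ge 2}(n-1)|b_n|$ via the Gauss summation \eqref{2.12} and Lemma~\ref{l9}, and compare with $\lambda$. The only cosmetic difference is in part (b), where the paper shifts the Pochhammer symbols and applies \eqref{2.12} directly, while you obtain the same value by subtracting $F(a,b;c;1)$ from the Lemma~\ref{l9} identity (and note the direct computation as an alternative), which is an equivalent calculation.
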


\begin{proof}
\begin{enumerate}[(a)]
\item Let $f_4(z)=z+\overline{\eta z \int_0^z F(a,b;c;\tau)d\tau}=z+\overline{\sum_{n=2}^{\infty}I_n\,z^n}$, where
\[I_n=\eta\,\dfrac{(a)_{n-2}(b)_{n-2}}{(c)_{n-2}(n-1)!}, \qquad n\in \mathbb{N}\setminus\{1\}.\]
Now
\begin{align*}
\sum_{n=2}^{\infty}(n-1)|I_n|&=|\eta|\sum_{n=2}^{\infty}\dfrac{(a)_{n-2}(b)_{n-2}}{(c)_{n-2}(n-2)!}\\
&=|\eta|\sum_{n=0}^{\infty}\dfrac{(a)_{n}(b)_{n}}{n!\,(c)_{n}}\\
&= |\eta|\,F(a,b;c;1)<\lambda,
\end{align*}
if \eqref{2.13} holds true. Therefore, from Theorem \ref{th3}, we conclude that $f_4(z)\in\Omega_{\mathcal{H}}^0(\lambda)$. 
\\
\item Now let $f_5(z)=z+\overline{\eta z\left(F(a,b;c;z)-1 \right)}=z+\overline{\sum_{n=2}^{\infty}J_n\,z^n}$, where
\[J_n=\eta\,\dfrac{(a)_{n-1}(b)_{n-1}}{(c)_{n-1}(n-1)!}, \qquad n\in \mathbb{N}\setminus\{1\}.\]
Now
\begin{align*}
\sum_{n=2}^{\infty}(n-1)|J_n|&=|\eta|\sum_{n=2}^{\infty}\dfrac{(a)_{n-1}(b)_{n-1}}{(c)_{n-1}(n-2)!}\\
&=|\eta|\sum_{n=0}^{\infty}\dfrac{(a)_{n+1}(b)_{n+1}}{n!\,(c)_{n+1}}\\
&=|\eta|\,\dfrac{ab}{c}\sum_{n=0}^{\infty}\dfrac{(a+1)_{n}(b+1)_{n}}{n!\,(c+1)_{n}}\\
&= |\eta|\,\dfrac{ab}{c}\,\dfrac{\Gamma(c+1)\Gamma(c-a-b-1)}{\Gamma(c-a)\Gamma(c-b)}\\
&= |\eta|\left(\dfrac{ab}{c-a-b-1}\right)F(a,b;c;1)<\lambda,
\end{align*}
if \eqref{2.14} holds true. Therefore, from Theorem \ref{th3}, we conclude that $f_5(z)\in\Omega_{\mathcal{H}}^0(\lambda)$. 
\\
\item Further, let $f_6(z)=z+\overline{\eta z^2F(a,b;c;z)}=z+\overline{\sum_{n=2}^{\infty}H_n\,z^n}$, where
\[H_n=\eta\,\dfrac{(a)_{n-2}(b)_{n-2}}{(c)_{n-2}(n-2)!}, \qquad n\in \mathbb{N}\setminus\{1\}.\]
Using Lemma \ref{l9}, we obtain
\begin{align*}
\sum_{n=2}^{\infty}(n-1)|H_n|&=|\eta|\sum_{n=2}^{\infty}\dfrac{(n-1)(a)_{n-2}(b)_{n-2}}{(c)_{n-2}(n-2)!}\\
&=|\eta|\sum_{n=0}^{\infty}\dfrac{(n+1)(a)_{n}(b)_{n}}{n!\,(c)_{n}} \\
&=|\eta|\left(\dfrac{ab}{c-a-b-1}+1\right)F(a,b;c;1)<\lambda,
\end{align*}
if \eqref{2.15} holds true. Therefore, from Theorem \ref{th3}, we conclude that $f_6(z)\in\Omega_{\mathcal{H}}^0(\lambda)$. 
\end{enumerate}
\end{proof}

\medskip
Now we produce conditions for harmonic univalent polynomials to belong the class $\Omega_{\mathcal{H}}^0(\lambda)$.

\begin{theorem}\label{th10}
For any $s\in \mathbb{N}$ and $c\in \mathbb{R}^+$, the following results hold
\begin{enumerate}[(a)]
\item $p_1(z)\in\Omega_{\mathcal{H}}^0(\lambda)$, if
\begin{equation}\label{2.16}
\Gamma(c)\Gamma(c+2s)<\lambda \left(\Gamma(c+m)\right)^2.
\end{equation}
\item $p_2(z)\in\Omega_{\mathcal{H}}^0(\lambda)$, if
\begin{equation}\label{2.17}
s^2\Gamma(c)\Gamma(c+2s)<\lambda(c+2s-1) \left(\Gamma(c+m)\right)^2.
\end{equation}
\item $p_3(z)\in\Omega_{\mathcal{H}}^0(\lambda)$, if
\begin{equation}\label{2.18}
(c+s^2+2s-1)\Gamma(c)\Gamma(c+2s)<\lambda(c+2s-1) \left(\Gamma(c+m)\right)^2.
\end{equation}
\end{enumerate}
Here, $p_1(z),\,p_2(z),$ and $p_3(z)$ are harmonic univalent polynomials, given by
\begin{align}\label{2.19}
\left\{\begin{array}{r@{\;}l}
    p_1(z)&=z+\overline{\eta \sum_{n=0}^s \begin{pmatrix}s\\n
\end{pmatrix} \dfrac{(s-n+1)_n}{(c)_n}\,\dfrac{z^{n+2}}{n+1}},\\
p_2(z)&=z+\overline{\eta \sum_{n=0}^s \begin{pmatrix}s\\n
\end{pmatrix} \dfrac{(s-n+1)_n}{(c)_n}\,z^{n+1}},\\
p_3(z)&=z+\overline{\eta \sum_{n=0}^s \begin{pmatrix}s\\n
\end{pmatrix} \dfrac{(s-n+1)_n}{(c)_n}\,z^{n+2}}.  
  \end{array}\right.
\end{align}
\end{theorem}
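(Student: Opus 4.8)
The plan is to derive all three parts from the sufficient condition of Theorem~\ref{th3}. Since the analytic part of each $p_j(z)$ is $h(z)=z$, we have $a_n=0$ for $n\ge 2$, so it is enough to verify $\sum_{n\ge 2}(n-1)|b_n|<\lambda$, where the $b_n$ are the Taylor coefficients of the co-analytic part of $p_j(z)$ (for $p_2$ the $n=0$ term of the sum contributes only an $\overline{\eta z}$ summand, which carries zero weight in the sum above and should be suppressed so that $p_2\in\mathcal H^0$). The computational engine is the elementary identity
\[
\binom{s}{n}(s-n+1)_n=\frac{(-s)_n\,(-s)_n}{n!},\qquad 0\le n\le s,
\]
which follows at once from $(-s)_n=(-1)^n s!/(s-n)!$ and $\binom{s}{n}=(-1)^n(-s)_n/n!$. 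It shows that each co-analytic part is assembled from the \emph{terminating} series $\sum_{n=0}^{s}\frac{(-s)_n(-s)_n}{n!(c)_n}z^n={}_2F_1(-s,-s;c;z)$; in fact $p_1,p_2,p_3$ are precisely $f_4,f_5,f_6$ of Theorem~\ref{th9} in the degenerate case $a=b=-s$, the advantage being that the series are now finite, so there are no convergence issues and the Gauss formula~\eqref{2.12} holds as a terminating identity for every $c>0$.

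For part~(a): after reindexing, the weight $n-1$ becomes $n+1$ and cancels the factor $\frac{1}{n+1}$ in the coefficients of $p_1$, giving
\[
\sum_{n\ge 2}(n-1)|b_n|=|\eta|\sum_{n=0}^{s}\binom{s}{n}\frac{(s-n+1)_n}{(c)_n}=|\eta|\,{}_2F_1(-s,-s;c;1)=|\eta|\,\frac{\Gamma(c)\,\Gamma(c+2s)}{(\Gamma(c+s))^{2}},
\]
where the last step is~\eqref{2.12} with $a=b=-s$, valid since $c+2s>0$. As $|\eta|<1$, hypothesis~\eqref{2.16} (read with $\Gamma(c+m)=\Gamma(c+s)$) makes this sum $<\lambda$, so Theorem~\ref{th3} yields $p_1(z)\in\Omega_{\mathcal H}^0(\lambda)$.

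For parts~(b) and~(c) the only new feature is the extra factor $n$ (for $p_2$) or $n+1$ (for $p_3$) surviving in the weighted coefficient sum. Here I would peel off the $n=0$ term and shift the index by one, using $(-s)_{n+1}=(-s)(1-s)_n$ and $(c)_{n+1}=c\,(c+1)_n$, to obtain
\[
\sum_{n=1}^{s}n\binom{s}{n}\frac{(s-n+1)_n}{(c)_n}=\frac{s^{2}}{c}\,{}_2F_1(1-s,1-s;c+1;1)=\frac{s^{2}\,\Gamma(c)\,\Gamma(c+2s-1)}{(\Gamma(c+s))^{2}},
\]
again by~\eqref{2.12} (now needing $c+2s-1>0$, which holds). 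Using $\Gamma(c+2s-1)=\Gamma(c+2s)/(c+2s-1)$, this quantity is $<\lambda$ exactly when~\eqref{2.17} holds; for $p_3$ one adds the part-(a) sum, producing the extra factor $s^{2}+(c+2s-1)=c+s^{2}+2s-1$, so the sum is $<\lambda$ exactly when~\eqref{2.18} holds. Since $|\eta|<1$, Theorem~\ref{th3} then gives $p_2,p_3\in\Omega_{\mathcal H}^0(\lambda)$. (Part~(c) also follows directly from Lemma~\ref{l9} read as a terminating identity with $a=b=-s$.) Each $p_j$ is a polynomial, hence a harmonic polynomial, and once it lies in $\Omega_{\mathcal H}^0(\lambda)$ its univalence for $\lambda\le 1/2$ is a consequence of Corollary~\ref{c3}.

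I expect the only real difficulty to be bookkeeping: matching the exponent shifts $z^{n+1},z^{n+2}$ in~\eqref{2.19} to the weight $n-1$ in Theorem~\ref{th3}, and manipulating the Pochhammer symbols so the terminating Gauss sum emerges cleanly. There is no genuine analytic obstacle, since everything collapses to finite sums and the identity~\eqref{2.12}.
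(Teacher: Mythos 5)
Your proposal is correct and follows essentially the paper's route: the paper proves this theorem simply by rewriting the binomial/Pochhammer factors via $(-s)_n=(-1)^ns!/(s-n)!$, recognizing $p_1,p_2,p_3$ as $f_4,f_5,f_6$ of Theorem \ref{th9} with $a=b=-s$, and substituting $a=b=-s$ into \eqref{2.13}--\eqref{2.15} together with the Gauss formula \eqref{2.12} to obtain \eqref{2.16}--\eqref{2.18} (with $\Gamma(c+m)$ indeed meaning $\Gamma(c+s)$, and $|\eta|<1$ absorbing the $\lambda/|\eta|$). Where you differ is in rigor rather than strategy: instead of invoking Theorem \ref{th9} verbatim --- whose hypotheses require $a,b,c\in\mathbb{R}^+$, so the substitution $a=b=-s$ is formally outside its scope --- you redo the weighted coefficient sums directly from Theorem \ref{th3}, exploiting that the series terminate so the Gauss/Chu--Vandermonde evaluation and the analogue of Lemma \ref{l9} hold without the positivity assumptions; you also correctly note that the $n=0$ term in the displayed $p_2$ must be suppressed (i.e.\ the sum should start at $n=1$, matching $f_5$) for $p_2$ to lie in $\mathcal{H}^0$. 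These two points are small gaps or typos in the paper that your version quietly repairs, and your Pochhammer bookkeeping and the final identities (including $s^2+(c+2s-1)=c+s^2+2s-1$ for part (c)) check out.
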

\begin{proof}
For any $\mu\in\mathbb{C}\setminus \mathbb{Z}^{-}$ and $n\in \mathbb{N}\cap\{0\}$, we can write
\[\dfrac{(-1)^n(-\mu)_n}{n!}=\begin{pmatrix}
\mu \\ n \end{pmatrix}=\dfrac{\Gamma(\mu+1)}{n!\Gamma(\mu-n+1)}.\]
But when this $\mu=s\in\mathbb{N}$, where $s\ge n$, we can write
\[(-s)_n=\dfrac{(-1)^ns!}{(s-n)!}.\]
Using this relation on $f_4(z)$, $f_5(z)$, and $f_6(z)$ of Theorem \ref{th9}, we obtain the harmonic univalent polynomials $p_1(z),\,p_2(z),$ and $p_3(z)$, respectively as defined on \eqref{2.19}. Now putting $a=b=-s$ in the conditions \eqref{2.13}, \eqref{2.14} and \eqref{2.15}, we obtain respective belongingness conditions \eqref{2.16}, \eqref{2.17} and \eqref{2.18}, of $p_1(z),\,p_2(z),$ and $p_3(z)$. This completes the proof.
\end{proof}

\medskip


\begin{thebibliography}{999}

\bibitem{css} J. Clunie, T. Sheil-Small (1984) Harmonic univalent functions, Ann. Acad. Sci. Fenn. Math. \textbf{9}, 3–-25, DOI:10.5186/aasfm.1984.0905.

\bibitem{durh} P. Duren (2004) Harmonic mappings in the plane, Volume 156 of Cambridge Tracts in Mathematics, Cambridge University Press, Cambridge.

\bibitem{pr} S. Ponnusamy, A. Rasila (2013) Planar harmonic and quasi regular mappings, Topics in Modern function Theory- CMFT, RMS-Lecture Notes Series (19), 267-–333.

\bibitem{ppk} S. Ponnusamy, J. K. Prajapat, A. S. Kaliraj (2015) Uniformly starlike and uniformly convex harmonic mappings, J. Anal. \textbf{23}, 121–-129.

\bibitem{jm} J. K. Prajapat, M. Manivannan (2023) Injectivity of harmonic mappings with fixed analytic part, Proc. Natl. Acad. Sci., India Sect. A Phys. Sci. \textbf{93}(1), 23–-29, DOI:10.1007/s40010-022-00781-0.

\bibitem{jms} J. K. Prajapat, M. Manivannan, S. Maharana (2020) Harmonic mappings with analytic part convex in one direction, J. Anal. \textbf{28}, 961–-972, DOI:10.1007/s41478-020-00226-0.

\bibitem{hm} R. Hern\'{a}ndez, M. J. Mart\'{i}n (2013) Stable geometric properties of analytic and harmonic functions, Math. Proc. Cambridge Philos. Soc. \textbf{155}, 343--359, DOI:10.1017/S0305004113000340.

\bibitem{kpv} D. Kalaj, S. Ponnusamy, M. Vuorinen (2014) Radius of close-to-convexity and full starlikeness of harmonic mappings, Complex Var. Elliptic Equ. \textbf{59}, 539–-552, DOI:10.1080/17476933.2012.759565.

\bibitem{pyy} S. Ponnusamy, H. Yamamoto, H. Yanagihara (2013) Variability regions for certain families of harmonic univalent mappings, Complex Var. Elliptic Equ. \textbf{58}(1), 23–-34, DOI:10.1080/17476933.2010.551200.

\bibitem{pj} P. P. Dash, J. K. Prajapat (2023) On certain analytic functions defined by differential inequality, {\it Pre-print}.

\bibitem{peng} Z. Peng, G. Zhong (2017) Some properties for certain classes of univalent functions defined by differential inequalities, Acta Math. Sci. Ser. B {\bf 37}(1), 69--78, DOI:10.1016/S0252-9602(16)30116-3.

\bibitem{hesam} H. Mahzoom, R. Kargar (2021) Further results for a subclass of univalent functions related with differential inequality, Indian J. Pure Appl. Math. {\bf 52}, 205--215, DOI:10.1007/s13226-021-00071-2.

\bibitem{obradovic} M. Obradovi\'{c},  Z. Peng (2017) Some new results for certain classes of univalent functions, Bull. Malays. Math. Sci. Soc. {\bf 41}, 1623--1628, DOI:10.1007/s40840-017-0546-0.

\bibitem{wani} A. Swaminathan, L. A. Wani (2020) Sufficient conditions and radii problems for a starlike class involving a differential inequality, Bull. Korean Math. Soc. {\bf 57}(6), 1409--1426, DOI:10.4134/BKMS.b191074.  

\bibitem{mth} T. H. MacGregor (1964) A class of univalent functions, Proc. Amer. Math. Soc. \textbf{15}(2), 311–-317, DOI:10.1090/S0002-9939-1964-0158985-5.

\bibitem{nmt} N. M. Temme (1996) Special Functions: An Introduction to the Classical Functions of Mathematical Physics, Wiley, New York.

\bibitem{prf} S. Ponnusamy, F. Ronning (1998) Starlikeness properties for convolutions involving hypergeometric series, Ann. Univ. Mariae Curie-Sk lodowska Sect. A, \textbf{52}, 141–-155.


\end{thebibliography}
\end{document}